\def\D{{\mathcal D}}
\def\A{{\mathcal A}}
\def\R{{\mathbb R}}
\def \<{\langle}
\def\>{\rangle}
\def \e{\epsilon}
\def \ff{\varphi}
\def \sp{\operatorname{span}}
\def \csp{\overline{\operatorname{span}}}
\def \sign{\operatorname{sign}}
\def\la{\lambda}
\def\a{\alpha}
\newcommand{\n}[1]{\left\| #1 \right\|}	
\newcommand{\pr}[1]{\left( #1 \right)}		
\DeclareMathOperator{\argmin}{argmin}		
\newtheorem{Theorem}{Theorem}[section]
\newtheorem{Lemma}[Theorem]{Lemma}
\newtheorem*{Definition}{Definition}
\newtheorem{Proposition}{Proposition}[section]
\newtheorem*{Remark}{Remark}
\newtheorem{Corollary}{Corollary}[section]
\numberwithin{equation}{section}
\newcommand{\be}{\begin{equation}}
\newcommand{\ee}{\end{equation}}
\begin{document}

\begin{frontmatter}

\title{{A unified way of analyzing some greedy algorithms}}

\author[usc]{A.\,V.~Dereventsov}
\ead{dereventsov@gmail.com}

\author[usc,sim,msu]{V.\,N.~Temlyakov}
\ead{temlyakovv@gmail.com}

\address[usc]{Department of Mathematics, University of South Carolina, Columbia, SC 29208, USA}
\address[sim]{Steklov Institute of Mathematics, Moscow, 119991, Russian Federation}
\address[msu]{Lomonosov Moscow State University, Moscow, 119991, Russian Federation}

\begin{abstract}
In this paper we propose a unified way of analyzing a certain kind of greedy-type algorithms in Banach spaces.
We define a class of the Weak Biorthogonal Greedy Algorithms that contains a wide range of greedy algorithms.
In particular, we show that the following well-known algorithms~--- the Weak Chebyshev Greedy Algorithm and the Weak Greedy Algorithm with Free Relaxation~--- belong to this class.
We investigate the properties of convergence, rate of convergence, and numerical stability of the Weak Biorthogonal Greedy Algorithms.
Numerical stability is understood in the sense that the steps of the algorithm are allowed to be performed with controlled computational inaccuracies.
We carry out a thorough analysis of the connection between the magnitude of those inaccuracies and the convergence properties of the algorithm.
To emphasize the advantage of the proposed approach, we introduce here a new greedy algorithm~--- the Rescaled Weak Relaxed Greedy Algorithm~--- from the above class, and derive the convergence results without analyzing the algorithm explicitly.
Additionally, we explain how the proposed approach can be extended to some other types of greedy algorithms.
\end{abstract}

\begin{keyword}
Banach space \sep greedy approximation \sep greedy algorithm \sep Weak Biorthogonal Greedy Algorithm.
\end{keyword}

\end{frontmatter}

\section{Introduction}\label{sec:introduction}

The theory of greedy approximation continues to actively develop, which is driven not only by a theoretical interest but by an applied prospect as well.
As a result, many known greedy-type algorithms are being examined and new ones are being invented to fit specific applications.
The tools employed in the analysis of these algorithms, however, are mostly identical, which is due to the fact that the behavior of a greedy algorithm is largely dictated by the geometry of the space rather than the particular details in the algorithm's description.

The main goal of this paper is to establish and investigate those underlying geometrical connections.
In this effort we present a unified way of analyzing a certain kind of greedy-type algorithms in Banach spaces.
Namely, we define a class of Weak Biorthogonal Greedy Algorithms (denoted further as $\mathcal{WBGA}$) and study the convergence properties for algorithms from this class.
In particular, well-known algorithms like the Weak Chebyshev Greedy Algorithm and the Weak Greedy Algorithm with Free Relaxation belong to this class.

Motivated by the issue of numerical stability, we also discuss modifications of the class $\mathcal{WBGA}$, which allow for the steps of the algorithms to be performed with imprecise calculations.
In accordance with the historical development, we call such algorithms {\it approximate} greedy algorithms.
We show that each of the considered approximate greedy algorithms belongs to a larger class, which we introduce and call the Approximate Weak Biorthogonal Greedy Algorithms (denoted further as $\mathcal{AWBGA}$).
We examine the relations between the $\mathcal{AWBGA}$ and various computational errors that can occur in practice, and emphasize the robustness of the algorithms from this class.

To the best of our knowledge, this paper is the first successful attempt to establish a unified way of analyzing a wide class of greedy-type algorithms.
The step from the analysis of a specific greedy algorithm to a unified analysis of a rather wide class of algorithms motivated us to introduce here a new greedy-type algorithm~--- the Rescaled Weak Relaxed Greedy Algorithm~--- from the above class, and derive the corresponding convergence results without analyzing the algorithm explicitly.

This paper is organized as follows.
In Section~\ref{sec:overview} we describe the problem setting, recall basic definitions and concepts, and introduce and state simple conclusions of our main results.
Section~\ref{sec:dual_greedy} contains a discussion on greedy algorithms and historical comments related to the topic.
Specifically, in this section we state the algorithms of interest and present a new greedy algorithm.
In Section~\ref{sec:wbga} we define the class $\mathcal{WBGA}$ and show that the algorithms of interest belong to that class.
Theoretical results related to the convergence properties of algorithms from the $\mathcal{WBGA}$ are stated in Section~\ref{sec:wbga_convergence} and proven in Section~\ref{sec:wbga_proofs}.
Section~\ref{sec:awbga} is devoted to the class $\mathcal{AWBGA}$ (approximate versions of algorithms from the $\mathcal{WBGA}$).
In Section~\ref{sec:awbga_convergence} we state the convergence results for the $\mathcal{AWBGA}$ and in Section~\ref{sec:awbga_proofs} we prove the stated results.
In Section \ref{sec:x-greedy} we comment on another type of greedy algorithms and explain how our findings can be adapted to fit those algorithms.
In Section~\ref{sec:discussion} we explain and discuss our motivation for the study of the above class of algorithms.

\section{Overview}\label{sec:overview}

The purpose of a greedy algorithm is to iteratively construct an approximation of an element $f \in X$ (the target element) by a linear combination of appropriate elements of a given set $\D \subset X$ (the dictionary).
The exact method of selecting the elements $\{\varphi_m\}_{m=1}^\infty$ from the dictionary and constructing approximators $\{G_m\}_{m=1}^\infty$ depends on the algorithm.
Generally, an $m$-th iteration of a greedy algorithm consists of the following stages:
\begin{itemize}
	\item Select a suitable element $\varphi_m$ from the dictionary $\D$;
	\item Construct an approximation $G_m$ for the target element $f$;
	\item Update the remainder $f_m := f - G_m$, and proceed to the iteration $m+1$.
\end{itemize}

This paper is focused on greedy algorithms in a Banach space setting.
Let $X$ be a real Banach space with a norm $\|\cdot\|$.
We say that a set of elements (functions) $\D$ from $X$ is a dictionary if each $g\in \D$ has the norm bounded by one ($\|g\|\le1$) and $\csp \D = X$.
For convenience we assume that every dictionary $\D$ is symmetric, i.e.
\[
	g \in \D \quad \text{implies} \quad -g \in \D.
\]
By $\A_1(\D)$ we denote the closure of the convex hull of $\D$ in $X$.

In a real Hilbert space it is natural to use the inner product to select the element $\varphi_m$ from the dictionary $\D$, e.g.
\[
	\varphi_m \ \ \text{is such that}\ \ 
	\<f_{m-1}, \varphi_m\> = \sup_{g \in \D} \<f_{m-1}, g\>.
\]
While in a general Banach space one does not have an inner product, there are alternative tools which can be utilized.
One known approach is to select an element $\varphi_m$ by solving an optimization problem, e.g.
\[
	\varphi_m \ \ \text{is such that} \ \ 
	\inf_{\lambda \in \mathbb{R}} \|f_{m-1} - \lambda \varphi_m\|
	= \inf_{g \in \D, \lambda \in \mathbb{R}} \|f_{m-1} - \lambda g\|.
\]
The algorithms that select elements of the dictionary in such a way are called the {\it X-greedy} algorithms.
\\
Another approach for selecting $\varphi_m$ is to use the norming functionals (defined below) to act as an analogue of an inner product, e.g.
\be\label{eq:dual_greedy_gs}
	\varphi_m \ \ \text{is such that}\ \ 
	F_{f_{m-1}}(\varphi_m) = \sup_{g \in \D} F_{f_{m-1}}(g).
\ee
The algorithms that employ norming functionals are called the {\it dual} greedy algorithms.

In this paper we concentrate on the dual greedy algorithms.
In Section~\ref{sec:x-greedy} we briefly discuss how our findings can be applied to analyze the X-greedy algorithms.
For a more detailed discussion on various types of greedy algorithms in the settings of Hilbert and Banach spaces, we refer the reader to the book~\cite{Tbook} and the references therein.

For a nonzero element $f \in X$ we let $F_f$ denote a norming (peak) functional for $f$:
\[
	\|F_f\| = 1, \qquad F_f(f) = \|f\|.
\]
The existence of such a functional is provided by the Hahn--Banach theorem, however the uniqueness is generally not guaranteed.
A Banach space in which each element $f \in X \setminus \{0\}$ has a unique norming functional is the smooth space, i.e. the space with the Gateaux-differentiable norm.
In such a case the value of the norming functional of an element $f$ on an element $g$ is the Gateaux-derivative of the norm $\|\cdot\|$ at $f$ in the direction $g$, i.e.
\[
	F_f(g) = \lim_{u \to 0} \frac{\|f + ug\| - \|f\|}{u}.
\]
Note that in many spaces of interest the norming functionals are known explicitly.
For instance, in $L_p(\Omega)$, $1 \le p < \infty$ we have
\[
	F_f(g) = \int\limits_\Omega \frac{\sign(f) |f|^{p-1} \, g}{\|f\|_p^{p-1}} \, \mathrm{d} \mu.
\]
For convenience of presentation, we introduce the norm $\|\cdot\|_\D$ in the dual space $X'$, associated with the dictionary $\D$ and given by the formula
\[
	\|F\|_\D := \sup_{g \in \D} F(g), \quad F \in X'.
\]
For more information on norming functionals and their connection to smoothness and other geometrical aspects of the space we refer the reader to the book~\cite{beauzamy1982introduction}.

Since formally the selection step~\eqref{eq:dual_greedy_gs} of a dual greedy algorithm might not always be attainable, it has became standard to work with {\it weak} greedy algorithms (proposed in~\cite{T13}), which perform the greedy selection step not precisely but with a controlled relaxation given by a sequence $\tau = \{t_m\}_{m=1}^\infty$, $t_m \in [0,1]$ (the weakness sequence), e.g.
\[
	\varphi_m \ \ \text{is such that}\ \ 
	F_{f_{m-1}}(\varphi_m)
	\ge t_m \sup_{g \in \D} F_{f_{m-1}}(g)
	= t_m \|F_{f_{m-1}}\|_\D.
\]

We now can define the class of Weak Biorthogonal Greedy Algorithms, which contains a large class of weak dual greedy algorithms.
We say that an algorithm belongs to the class $\mathcal{WBGA}$ if it satisfies the following criteria for every $m \ge 1$, where we set $f_0 := f$ and $G_0 := 0$:
\begin{enumerate}[label=\bf(\arabic*), leftmargin=.5in]
	\item{\bf Greedy selection.} At the $m$-th iteration the algorithm selects an element of the dictionary $\varphi_m \in \D$, which satisfies
		\[
			F_{f_{m-1}}(\varphi_m) \ge t_m \|F_{f_{m-1}}\|_\D;
		\]
	\item{\bf Error reduction.} The remainder $f_m$ is changed in such a way that
		\[
			\|f_m\| \le \inf_{\lambda \ge 0} \|f_{m-1} - \lambda \varphi_m\|;
		\]
	\item{\bf Biorthogonality.} The remainder $f_m$ is biorthogonal to the constructed approximant $G_m \in \sp\{\varphi_1,\dots,\varphi_m\}$
		\[
			F_{f_m}(G_m) = 0.
		\]
\end{enumerate}

Note that the $\mathcal{WBGA}$ is not an algorithm but rather the class (collection) of algorithms that possess the properties~\ref{wbga_gs}--\ref{wbga_bo}.
In particular, in Section~\ref{sec:dual_greedy} we state some well-known dual greedy algorithms that belong to this class and introduce a new one.
In Section~\ref{sec:wbga} we discuss some aspects of the class $\mathcal{WBGA}$ and show that the stated algorithms belong to this class.

We prove convergence and rate of convergence results for the $\mathcal{WBGA}$, which provide the above mentioned unified way of analyzing greedy-type algorithms from this class.
Our results on convergence and rate of convergence are formulated in terms of modulus of smoothness of the space.
For a Banach space $X$ the modulus of smoothness $\rho$ is defined as
\be\label{def:modulus_of_smoothness}
	\rho(u) := \rho(u,X) = \sup_{\|x\| = \|y\| = 1} \frac{\|x + uy\| + \|x - uy\|}{2} - 1.
\ee
The uniformly smooth Banach space is the one with the property
\[
	\lim_{u \to 0} \rho(u)/u = 0.
\]
We say that the modulus of smoothness $\rho(u)$ is of power type $1 < q \le 2$ if $\rho(u) \le \gamma u^q$ with some $\gamma > 0$.
In particular, it is known that $L_p$-spaces with $p \in [1,\infty)$ have modulus of smoothness of power type $\min\{p,2\}$ (see e.g.~\cite[Lemma~B.1]{donahue1997rates}).

The main results of the paper~--- Theorems~\ref{thm:wbga_convergence} and~~\ref{thm:wbga_convergence_rate}~--- are stated in Section~\ref{sec:wbga_convergence}.
Below we give a demonstrative corollary of those theorems.
\begin{Corollary}\label{cor:wbga_t_m=t}
Let $X$ be a uniformly smooth Banach space with modulus of smoothness $\rho(u)$ of power type $1 < q \le 2$, that is $\rho(u) \le \gamma u^q$.
Then an algorithm from the $\mathcal{WBGA}$ with a constant weakness sequence $\tau = \{t\}$, $t \in (0,1]$ converges for any target element $f \in X$.
\\
Moreover, if $f \in \A_1(\D)$, then we have
\[
	\|f_m\| \le C(q,\gamma,t) \, m^{-1/p},
\]
where $C(q,\gamma,t) = 16 \gamma^{1/q} \, t^{-1/p}$ and $p = q/(q-1)$.
\end{Corollary}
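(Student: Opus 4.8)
The plan is to derive Corollary~\ref{cor:wbga_t_m=t} as a direct consequence of the two main theorems, Theorems~\ref{thm:wbga_convergence} and~\ref{thm:wbga_convergence_rate}, once those are available. Since the corollary is stated as a ``demonstrative corollary,'' the real work is done at the level of the general theorems; here one only needs to specialize them to the case of a constant weakness sequence $\tau=\{t\}$ and a power-type modulus of smoothness $\rho(u)\le\gamma u^q$. So I will first invoke the convergence theorem (Theorem~\ref{thm:wbga_convergence}) with $t_m\equiv t$: the standard hypothesis guaranteeing convergence of weak dual greedy algorithms in a uniformly smooth space is that $\sum_m t_m^p/m=\infty$ (or an analogous divergence condition), which for the constant sequence reads $\sum_m t^p/m = t^p\sum_m 1/m = \infty$, trivially satisfied. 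This yields $\|f_m\|\to0$ for every $f\in X$.

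For the rate estimate, I would apply Theorem~\ref{thm:wbga_convergence_rate} to an element $f\in\A_1(\D)$. Such theorems typically produce a recursive inequality of the shape
\[
	\|f_m\|^p \le \|f_{m-1}\|^p\bigl(1 - c\,t^p\,\|f_{m-1}\|^{p(q-1)}\, (\text{something})\bigr)
\]
or, after the usual manipulations, an estimate of the form $\|f_m\|\le C\,t^{-1/p}\gamma^{1/q} m^{-1/p}$ with $p=q/(q-1)$. The task is therefore to track the constants through the proof of Theorem~\ref{thm:wbga_convergence_rate}, specialized to $t_m\equiv t$, and verify that they collapse to $C(q,\gamma,t)=16\gamma^{1/q}t^{-1/p}$. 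The key analytic ingredient behind Theorem~\ref{thm:wbga_convergence_rate} is the bound $\|f_m\|\le\|f_{m-1}\|\inf_{\lambda\ge0}\bigl(1-\lambda\,a_m/\|f_{m-1}\| + 2\rho(2\lambda/\|f_{m-1}\|)\bigr)$ coming from properties~\ref{wbga_gs} and~\ref{wbga_bo} together with the definition~\eqref{def:modulus_of_smoothness} of $\rho$; here $a_m\ge t\,\|F_{f_{m-1}}\|_\D\ge t\,F_{f_{m-1}}(f_{m-1} - f)$ when $f\in\A_1(\D)$, since $\|F\|_\D\ge F(\phi)$ for all $\phi\in\D$ and hence for all $\phi\in\A_1(\D)$. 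Combining $F_{f_{m-1}}(f_{m-1}) = \|f_{m-1}\|$ with $F_{f_{m-1}}(f)\le\|F_{f_{m-1}}\|_\D$ (valid because $f\in\A_1(\D)$) gives $a_m\ge t(\|f_{m-1}\| - \|F_{f_{m-1}}\|_\D)$, which one feeds into the recursion; plugging $\rho(u)\le\gamma u^q$ and optimizing in $\lambda$ yields the $m^{-1/p}$ decay.

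The main obstacle I anticipate is not conceptual but bookkeeping: one must carefully optimize the expression $1 - \lambda b + 2\gamma(2\lambda/\|f_{m-1}\|)^q$ over $\lambda\ge0$, substitute the resulting bound into the nonlinear recursion $\|f_m\|\le\|f_{m-1}\|(1 - c\,\|f_{m-1}\|^{p(q-1)})$ — or rather into the inequality obtained after raising to the power $p$ — and then run the standard lemma on sequences satisfying such recursions (for instance the estimate that $x_m\le x_{m-1}(1 - r\,x_{m-1}^{s})$ with $x_0\le A$ forces $x_m\le C m^{-1/s}$) while keeping every constant explicit, so that the final constant comes out to exactly $16\gamma^{1/q}t^{-1/p}$ rather than some larger numerical multiple. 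Since $f\in\A_1(\D)$ implies $\|f\|\le1$, the initial value $\|f_0\|=\|f\|\le1$ is controlled, which is what makes the clean constant possible. I would present the argument by stating ``apply Theorem~\ref{thm:wbga_convergence} and Theorem~\ref{thm:wbga_convergence_rate} with $t_m=t$'' and then, if needed, note the elementary simplification $\sum 1/m=\infty$ and the reduction of the general rate bound to the displayed one.
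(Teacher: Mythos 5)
Your overall route is the same as the paper's: Corollary~\ref{cor:wbga_t_m=t} is stated without a separate proof precisely because it is meant to follow by specializing Theorem~\ref{thm:wbga_convergence} (via Corollary~\ref{cor:wbga_convergence}) and Theorem~\ref{thm:wbga_convergence_rate} (via Corollary~\ref{cor:wbga_convergence_rate}) to the constant sequence $t_m\equiv t$. The convergence half of your plan is fine, except that the hypothesis you should quote is the paper's condition $\sum_m t_m^p=\infty$ from~\eqref{eq:sum_t_m^p}, not $\sum_m t_m^p/m=\infty$; for constant $t$ both diverge, so no harm is done there.

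Two concrete gaps remain in the rate half. First, your key duality step is garbled: what the paper's machinery (Lemma~\ref{L2.2c} together with biorthogonality~\ref{wbga_bo}) gives is the chain $F_{f_{m-1}}(\varphi_m)\ge t\|F_{f_{m-1}}\|_\D=t\sup_{\phi\in\A_1(\D)}F_{f_{m-1}}(\phi)\ge t\,F_{f_{m-1}}(f)=t\,F_{f_{m-1}}(f_{m-1}+G_{m-1})=t\|f_{m-1}\|$, whereas your displayed bound $a_m\ge t\bigl(\|f_{m-1}\|-\|F_{f_{m-1}}\|_\D\bigr)$ feeds the inequality $F_{f_{m-1}}(f)\le\|F_{f_{m-1}}\|_\D$ in with the wrong sign: since $\|F_{f_{m-1}}\|_\D\ge\|f_{m-1}\|$, your right-hand side is nonpositive and yields nothing. (Also the smoothness estimate should involve $2\rho(\lambda/\|f_{m-1}\|)$, without the extra factor $2$ inside $\rho$.) Second, and more seriously, the constant bookkeeping you defer is not a formality: specializing Corollary~\ref{cor:wbga_convergence_rate} gives $\|f_m\|\le 4(2\gamma)^{1/q}(1+mt^p)^{-1/p}\le 8\gamma^{1/q}\,t^{-1}m^{-1/p}$, and likewise the optimized recursion $\|f_m\|^p\le\|f_{m-1}\|^p\bigl(1-c\,t^p\|f_{m-1}\|^p\bigr)$ leads to $\|f_m\|^p\le(1+c\,t^pm)^{-1}$, hence again a bound of the form $\mathrm{const}\cdot t^{-1}m^{-1/p}$. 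Since $t^{-1/p}\le t^{-1}$ for $t\in(0,1]$, the constant $16\gamma^{1/q}t^{-1/p}$ that you promise to ``verify'' is strictly stronger in its $t$-dependence than anything your plan (or Theorem~\ref{thm:wbga_convergence_rate} itself) produces; the factor $16$ easily absorbs $4\cdot 2^{1/q}\le 8$, but not the gap between $t^{-1}$ and $t^{-1/p}$. So at this point you must either supply a genuinely new argument improving the exponent of $t$, or state the rate with $t^{-1}$ in place of $t^{-1/p}$; asserting that the constants ``collapse'' to the displayed value is the step of your proposal that would fail.
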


Corollary~\ref{cor:wbga_t_m=t} addresses two important characteristics of an algorithm~--- convergence and rate of convergence.
We also consider here another essential characteristic~--- computational stability, which is crucial for practical implementation.
A systematic study of the stability of greedy algorithms in Banach spaces was started in~\cite{T7}, where sufficient conditions for convergence and rate of convergence were established.
A further development of the theory of stability of greedy algorithms was conducted in~\cite{De}, where necessary and sufficient conditions for the convergence of a single algorithm were obtained.

In Section~\ref{sec:awbga} we address the stability issue of algorithms from the class $\mathcal{WBGA}$.
We analyze an extended version of the $\mathcal{WBGA}$, which allows the steps of an algorithm to be performed with some controlled inaccuracies.
Algorithms with imprecise step evaluations are well-known in the greedy approximation theory; this type of algorithms are called the {\it approximate} greedy algorithms (see e.g.~\cite{gribonval2001approximate, galatenko2003convergence, galatenko2005generalized}).
Permissions of approximate evaluations are natural for the applications as they simplify and speed up the execution of an algorithm.

We begin by defining the variables that represent these inaccuracies (called the error parameters or the error sequences).
Let $\{\delta_m\}_{m=0}^\infty$, $\{\eta_m\}_{m=1}^\infty$ be  sequences of real numbers from the interval $[0,1]$, and let $\{\e_m\}_{m=1}^\infty$ be a sequence of non-negative numbers. 
Denote by $\{F_m\}_{m=0}^\infty$ a sequence of such functionals that for any $m \geq 0$
\[
	\|F_m\| \leq 1
	\ \text{ and }\ 
	F_m(f_m) \geq (1-\delta_m)\|f_m\|,
\]
where $\{f_m\}_{m=0}^\infty$ is the sequence of remainders produced by the algorithm.

We say that an algorithm belongs to the class $\mathcal{AWBGA}$ if for every $m \ge 1$ it satisfies the following relaxed versions of conditions~\ref{wbga_gs}--\ref{wbga_bo}:
\begin{enumerate}[label=\bf(\arabic*), leftmargin=.5in]
	\item{\bf Greedy selection.} At the $m$-th iteration the algorithm selects an element of the dictionary $\varphi_m \in \D$, which satisfies
		\[
			F_{m-1}(\varphi_m) \ge t_m \|F_{m-1}\|_\D;
		\]
	\item{\bf Error reduction.} The remainder $f_m$ is changed in such a way that
		\[
			\|f_m\| \le (1 + \eta_m) \inf_{\lambda \ge 0} \|f_{m-1} - \lambda \varphi_m\|;
		\]
	\item{\bf Biorthogonality.} The constructed approximant $G_m \in \sp\{\varphi_1,\dots,\varphi_m\}$ is such that
		\[
			|F_m(G_m)| \le \e_m.
		\]
\end{enumerate}

Similarly to the $\mathcal{WBGA}$, in Section~\ref{sec:awbga_convergence} we state convergence and rate of convergence results for the $\mathcal{AWBGA}$ (Theorems~\ref{thm:awbga_convergence} and~\ref{thm:awbga_convergence_rate}).
For illustrative purposes we formulate here a simple corollary of those results.
\begin{Corollary}\label{cor:awbga_t_m=t}
Let $X$ be a uniformly smooth Banach space with the modulus of smoothness $\rho(u)$ of power type $1 < q \le 2$.
Then an algorithm from the $\mathcal{AWBGA}$ with a constant weakness sequence $\tau = \{t\}$, $t \in (0,1]$ and error parameters $\{\delta_m\}_{m=0}^\infty$, $\{\eta_m\}_{m=1}^\infty$, $\{\e_m\}_{m=1}^\infty$ with
\[
	\lim_{m\to\infty} \delta_m
	= \lim_{m\to\infty} \eta_m
	= \lim_{m\to\infty} \e_m
	= 0
\]
converges for any target element $f \in X$.
\\
Moreover, if $f \in \A_1(\D)$ and
\[
	\delta_m + \e_m/\|f_m\| \leq 1/4,
	\quad
	\delta_m + \eta_{m+1} \leq C(q,\gamma,t)^{-p} \|f_m\|^p,
\]
then we get
\[
	\|f_m\| \le C(q,\gamma,t) \, m^{-1/p},
\]
where $C(q,\gamma,t) = 16 \gamma^{1/q} \, t^{-1/p}$ and $p = q/(q-1)$.
\end{Corollary}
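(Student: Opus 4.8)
The plan is to derive this corollary from the general convergence and rate-of-convergence theorems for the $\mathcal{AWBGA}$ (Theorems~\ref{thm:awbga_convergence} and~\ref{thm:awbga_convergence_rate}), exactly as Corollary~\ref{cor:wbga_t_m=t} is derived from the corresponding $\mathcal{WBGA}$ theorems. First I would specialize those theorems to the case of a constant weakness sequence $\tau = \{t\}$ and a power-type modulus of smoothness $\rho(u) \le \gamma u^q$. For the convergence part, the general theorem should say that an algorithm from the $\mathcal{AWBGA}$ converges provided the weakness sequence does not decay too fast (here it is constant, so this is automatic) and the error sequences $\{\delta_m\}$, $\{\eta_m\}$, $\{\e_m\}$ tend to $0$ (possibly with a summability or rate condition that, under a power-type modulus, reduces to mere convergence to $0$); I would simply verify that the stated hypotheses meet these conditions and quote the theorem.

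For the rate, the main work is to unwind the recursive inequality that Theorem~\ref{thm:awbga_convergence_rate} produces for $a_m := \|f_m\|$. By analogy with the clean $\mathcal{WBGA}$ case, one expects an inequality of the shape
\[
	a_m \le a_{m-1}\Bigl(1 - c\, t^{p}\, a_{m-1}^{p-1} + (\text{error terms in } \delta_{m-1},\eta_m,\e_m)\Bigr),
\]
valid as long as the remainder stays in (a scaled copy of) $\A_1(\D)$, using $f \in \A_1(\D)$ together with the biorthogonality-type bound $|F_m(G_m)| \le \e_m$ and $F_m(f_m) \ge (1-\delta_m)\|f_m\|$. The two smallness hypotheses $\delta_m + \e_m/\|f_m\| \le 1/4$ and $\delta_m + \eta_{m+1} \le C(q,\gamma,t)^{-p}\|f_m\|^p$ are precisely what is needed to absorb the error terms into the main decay term: the first keeps the effective norming functional's action on $f_m$ bounded below by a fixed fraction (so the geometry argument still bites), and the second makes the additive error at step $m$ no larger than a constant multiple of the gain $a_{m-1}^{p} \sim a_{m-1}\cdot a_{m-1}^{p-1}$, so the recursion degrades by at most a constant factor compared with the exact case. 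I would then invoke the standard lemma on sequences satisfying $a_m \le a_{m-1}(1 - c\, a_{m-1}^{p-1})$ (with a suitably adjusted constant), which yields $a_m \le C m^{-1/p}$, and check that the bookkeeping leaves the constant at $C(q,\gamma,t) = 16\gamma^{1/q} t^{-1/p}$, the same value as in the exact case — the point being that the hypotheses were calibrated to that very constant.

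The main obstacle is the interplay between the two smallness conditions and the induction: the second hypothesis bounds $\delta_m + \eta_{m+1}$ in terms of $\|f_m\|$, which is the quantity being estimated, so one must run an induction in which the bound $\|f_k\| \le C k^{-1/p}$ for $k < m$ is used to control the error terms that feed into the estimate for $\|f_m\|$. I would set this up carefully, making sure the base case holds (for $m$ small, $\|f_m\| \le \|f\| \le 1$ since $f \in \A_1(\D)$, and the constant $C$ is large enough that $C m^{-1/p} \ge 1$ for the first few $m$) and that the inductive step closes with the \emph{same} constant rather than a growing one. A secondary technical point is that $\|f_m\|$ may vanish or become very small, in which case the division $\e_m/\|f_m\|$ must be interpreted appropriately; but if some $f_m = 0$ the algorithm has already found $f$ exactly and the bound is trivial, so this case can be dispatched separately.
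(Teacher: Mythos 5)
Your overall route is the paper's: Corollary~\ref{cor:awbga_t_m=t} is stated in the paper purely as a specialization of Theorems~\ref{thm:awbga_convergence} and~\ref{thm:awbga_convergence_rate}, and your treatment of the convergence half (a constant weakness sequence makes $\sum_m t^p=\infty$ automatic, and the conditions $\delta_{m-1}+\eta_m=o(t_m^p)$, $\e_m=o(1)$ reduce to the three error sequences tending to zero) is exactly the intended argument. For the rate half, though, you do more work than is needed and in one place the wrong kind of work: Theorem~\ref{thm:awbga_convergence_rate} already outputs the closed-form bound $\|f_m\|\le\max\{4\e,\,C(q,\gamma)(A(\e)+\e)(1+\sum_{k=1}^m t_k^p)^{-1/p}\}$ under hypotheses that are themselves stated in terms of $\|f_m\|$, so with $f\in\A_1(\D)$ one simply takes $\e=0$, $f^\e=f$, $A(\e)=1$, $t_k\equiv t$ and uses $(1+mt^p)^{-1/p}\le t^{-1}m^{-1/p}$; there is no recursion left to unwind, and the induction you propose (feeding $\|f_k\|\le Ck^{-1/p}$ for $k<m$ back into the error conditions to avoid circularity) is unnecessary, because the corollary's smallness hypotheses, like the theorem's, are simply assumptions on the quantities $\delta_m+\e_m/\|f_m\|$ and $\delta_m+\eta_{m+1}$ relative to $\|f_m\|$, not something to be established along the way. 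The one step you explicitly defer --- ``checking the bookkeeping'' that the corollary's conditions with $C(q,\gamma,t)=16\gamma^{1/q}t^{-1/p}$ imply the theorem's conditions (which carry a factor $\tfrac12 C(q,\gamma)^{-p}t^p$ with a differently shaped constant) --- is in fact the only place where real verification is required, and you assert rather than perform it; the paper leaves this reconciliation implicit as well, so your proposal is acceptable in outline but should not present that calibration as automatic.
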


\section{Dual greedy algorithms}\label{sec:dual_greedy}

The goal of this paper is to establish a unified way of analyzing a certain kind of dual greedy algorithms.
In this section we briefly overview the most prominent algorithms and explicitly define the ones that are directly related to our new results.
By $\tau := \{t_m\}_{m=1}^\infty$ we denote a weakness sequence, i.e. a given sequence of non-negative numbers $t_m \le 1$, $m=1,2,3,\dots$.

We define first the Weak Chebyshev Greedy Algorithm (WCGA) (introduced in~\cite{T15}) that can be thought of as a generalization of the Orthogonal Greedy Algorithm (defined and studied in~\cite{DTe}, also known as the Orthogonal Matching Pursuit~\cite{pati1993orthogonal}) to a Banach space setting.

\bigskip\noindent
{\bf Weak Chebyshev Greedy Algorithm (WCGA)\\}
Set $f^c_0 := f$ and $G^c_0 := 0$.
Then for each $m \ge 1$
\begin{enumerate}
	\item Take any $\varphi^{c}_m \in \D$ satisfying $F_{f^{c}_{m-1}}(\varphi^{c}_m) \ge t_m \| F_{f^{c}_{m-1}}\|_\D$;
	\item Denote $\Phi_m^c := \sp \{\varphi^{c}_j\}_{j=1}^m$ and find $G_m^c \in \Phi_m^c$ such that
		\[
			G_m^c = \mathop{\argmin}_{G \in \Phi_m^c} \n{f - G};
		\]
	\item Set $f^{c}_m := f-G^c_m$.
\end{enumerate}
\smallskip

Note that in order to construct the $m$-th approximant $G_m^c$ the WCGA has to solve an $m$-dimensional optimization problem
\[
	(\lambda_1^m, \ldots, \lambda_m^m)
	= \mathop{\argmin}_{(\lambda_1, \ldots, \lambda_m) \in \mathbb{R}^m} \n{f - \sum_{k=1}^m \lambda_k \varphi_k^c},
	\quad
	G_m^c = \sum_{k=1}^m \lambda_k^m \varphi_k^c.
\]
From a computational perspective, such a problem, even though convex, becomes more expensive with each iteration.

The greedy algorithm that attempts to maximally simplify the approximant construction process is the Weak Dual Greedy Algorithm (WDGA)~\cite{dilworth2002convergence}, which can be thought of as a generalization of the Pure Greedy Algorithm (\cite{DTe}, also known as the Matching Pursuit, see e.g.~\cite{mallat1993matching}).
In the WDGA the approximation $G_m$ is constructed by solving a one-dimensional convex optimization problem
\[
	\lambda_m = \mathop{\argmin}_{\lambda \ge 0} \n{f_{m-1} - \lambda \varphi_m},
	\quad
	G_m = G_{m-1} + \lambda_m \varphi_m.
\]
Such an approach is much simpler computationally when compared to the full $m$-dimensional projection performed by the WCGA, however the convergence results for the WDGA are quite scarce (see~\cite{dilworth2002convergence, ganichev2003convergence}).

There is a famous technique of constructing the next approximation as a linear combination of the previous approximant and a newly chosen element of the dictionary, i.e.
\[
	G_m = \omega_m G_{m-1} + \lambda_m \varphi_m
\]
with appropriately chosen coefficients $\omega_m$ and $\lambda_m$.
Greedy algorithms that employ such an approach are called the {\it relaxed} greedy algorithms.
While the parameter $\lambda_m$ is generally selected by the line search, the selection of $\omega_m$ is where relaxed greedy algorithms differ.

A particular approach in this direction is to omit the selection of $\omega_m$ by taking it to be the convex complement of $\lambda_m$, i.e. $\omega_m = 1 - \lambda_m$.
Such a technique is in the style of the renown Frank--Wolfe algorithm (see e.g.~\cite{frank1956algorithm, jaggi2013revisiting}).
The greedy algorithm that utilizes this concept is the Weak Relaxed Greedy Algorithm~\cite{T15}, which constructs an approximant in the following way
\[
	\lambda_m = \mathop{\argmin}_{0 \le \lambda \le 1} \|f - ((1 - \lambda) G_{m-1} + \lambda \varphi_m)\|,
	\quad
	G_m = (1 - \lambda_m) G_{m-1} + \lambda_m \varphi_m.
\]
Despite the attractive computational simplicity of this approach, it is naturally limited in the sense that the algorithm converges only for the target elements from the closure of the convex hull of the elements of the dictionary (i.e. the class $\A_1(\D)$).

An easy way to mitigate this restriction is by not imposing any restraining connections on coefficients $\omega_m$ and $\lambda_m$.
Perhaps the most straightforward realization of this concept, which is closer to the style of the WCGA, is to solve the minimization problem over both parameters $\omega_m,\lambda_m$.
This is the idea behind the Weak Greedy Algorithm with Free Relaxation~\cite{T26}.

\bigskip\noindent
{\bf Weak Greedy Algorithm with Free Relaxation (WGAFR)\\}
Set $f^e_0 := f$ and $G^e_0 := 0$.
Then for each $m \ge 1$
\begin{enumerate}
	\item Take any $\varphi^e_m \in \D$ satisfying $F_{f^e_{m-1}}(\varphi^e_m) \ge t_m \|F_{f^e_{m-1}}\|_\D$;
	\item Find $\omega_m \in \mathbb{R}$ and $ \lambda_m \ge 0$ such that
		\[
			\|f - ((1 - \omega_m) G^e_{m-1} + \la_m \varphi^e_m)\|
			= \inf_{ \la \ge 0, \omega \in \mathbb{R}} \|f - ((1 - \omega) G^e_{m-1} + \la \varphi^e_m)\|
		\]
		and define $G^e_m := (1 - \omega_m) G^e_{m-1} + \la_m \varphi^e_m$;
	\item Set $f^e_m := f - G^e_m$.
\end{enumerate}
\smallskip

Note that the second step of the WGAFR is generally much simpler computationally than the second step the WCGA since at the $m$-th iteration the WGAFR solves a $2$-dimensional optimization problem, while the WCGA has to solve an $m$-dimensional one.

Nevertheless, in some problems it might be preferable to simplify the algorithm even further by decoupling the choice of parameters $\omega_m,\lambda_m$ and hence reducing the dimensionality of the minimization problem.
Based on this idea, we propose here a new relaxed dual greedy algorithm that employs two one-dimensional minimizations~--- the Rescaled Weak Relaxed Greedy Algorithm (RWRGA).

\bigskip\noindent
{\bf Rescaled Weak Relaxed Greedy Algorithm (RWRGA)\\}
Set $f^r_0 := f$ and $G^r_0 := 0$.
Then for each $m \ge 1$
\begin{enumerate}
	\item Select any $\varphi^r_m \in \D$ satisfying $F_{f^r_{m-1}}(\varphi^r_m) \ge t_m \|F_{f^r_{m-1}}\|_\D$;
	\item Find $\lambda_m \ge 0$ such that
		\[
			\|f^r_{m-1} - \la_m \varphi^r_m\| = \inf_{\la \ge 0} \|f^r_{m-1} - \la \varphi^r_m\|;
		\]
	\item Find $\mu_m \in \mathbb{R}$ such that
		\[
			\|f - \mu_m (G^r_{m-1} + \la_m \varphi^r_m)\| 
			= \inf_{\mu \in \mathbb{R}} \|f - \mu (G^r_{m-1} + \la_m \varphi^r_m)\|
		\]
		and define $G^r_m := \mu_m (G^r_{m-1} + \la_m \varphi^r_m)$;
	\item Set $f^r_m := f - G^r_m$.
\end{enumerate}
\smallskip

Note that the idea of selecting the parameters $\omega_m$ and $\lambda_m$ independently is quite popular and has been considered before in various areas.
In particular, the approach of choosing $\{\omega_m\}_{m=1}^\infty$ as values of a predefined sequence is considered in~\cite{BCDD} for a Hilbert space, and in~\cite{T26} for a general Banach space setting (see also~\cite[Chapter~6.5]{Tbook}).

Recently, an interesting new idea of building relaxed greedy algorithms was proposed in~\cite{GG}, where the $\la_m$ is specified explicitly in terms of $f_{m-1}$ and of parameters $\gamma$ and $q$ characterizing modulus of smoothness~\eqref{def:modulus_of_smoothness} of the Banach space $X$:
\[
	\la_m := \sign(F_{f_{m-1}}(\ff_m)) \, \|f_{m-1}\| \, \pr{\frac{|F_{f_{m-1}}(\ff_m)|}{2\gamma q}}^{1/(q-1)},
\]
and the scaling parameter $\mu_m$ is chosen via the line search.

Such an algorithm is even simpler computationally than the RWRGA as it involves only one $1$-dimensional optimization problem rather than two.
However, this kind of further simplification might not always be beneficial as it results in certain drawbacks.
For instance, one requires an {\it a priori} knowledge about the Banach space in terms of the values of $\gamma$ and $q$.
Moreover, since the value of $\lambda_m$ directly depends on the evaluation of $F_{f_{m-1}}$, it is hard to predict how the algorithm behaves under the imprecise evaluation of the norming functionals.
Surely, this algorithm is beneficial in certain settings when the smoothness of the space is known (e.g. $L_p$-spaces) and the precision of computations is not an issue; however the detailed comparison of various greedy algorithms is outside of the scope of this paper.

In the next section we show that despite the differences in the approximant construction and computational complexity, all three algorithms (the WCGA, the WGAFR, and the RWRGA) admit the same rate of convergence estimates (at least theoretically) due to the fact that the stages of those algorithms (selection of the element $\varphi_m$ and construction of the approximant $G_m$) possess the same geometrical properties~\ref{wbga_gs}--\ref{wbga_bo}.

\section{Weak Biorthogonal Greedy Algorithms}\label{sec:wbga}

While in the previous section we introduced the variety of seemingly different dual greedy algorithms, in this section we show that the convergence analysis of those algorithms is based on underlying geometrical properties which are identical for all algorithms.

This is our motivation to establish those properties by defining the class of the Weak Biorthogonal Greedy Algorithms (denoted further as $\mathcal{WBGA}$), introduced in Section~\ref{sec:overview}.

We say that an algorithm belongs to the class $\mathcal{WBGA}$ if sequences of remainders $\{f_m\}_{m=0}^\infty$, approximators $\{G_m\}_{m=0}^\infty$, and selected elements $\{\varphi_m\}_{m=1}^\infty$ satisfy the following conditions at every iteration $m \ge 1$:
\begin{enumerate}[label=\bf(\arabic*), leftmargin=.5in]
	\item\label{wbga_gs}
		Greedy selection: ${\displaystyle F_{f_{m-1}}(\varphi_m  ) \ge t_m \|F_{f_{m-1}}\|_\D}$;
	\item\label{wbga_er}
		Error reduction: ${\displaystyle \|f_m\| \le \inf_{\la\ge 0}\|f_{m-1} -\la \ff_m\|}$;
	\item\label{wbga_bo}
		Biorthogonality: ${\displaystyle F_{f_m}(G_m) = 0}$.
\end{enumerate}

\begin{Remark}
Note that it is sufficient to take the infimum over $\lambda \ge 0$ in the error reduction condition~\ref{wbga_er} since the greedy selection condition~\ref{wbga_gs} implies that
\[
	\inf_{\lambda \in \mathbb{R}} \|f_{m-1} - \lambda \varphi_m\| = \inf_{\la \ge 0} \|f_{m-1} - \la \varphi_m\|.
\]
Indeed, for any $\lambda < 0$ we have
\begin{align*}
	\|f_{m-1} - \lambda \varphi_m\|
	&\ge F_{f_{m-1}} (f_{m-1} - \lambda \varphi_m)
	\\
	&\ge \|f_{m-1}\| - \lambda t_m \|F_{f_{m-1}}\|_\D
	\ge \|f_{m-1}\|.
\end{align*}
\end{Remark}

\begin{Remark}
Note that the biorthogonality condition~\ref{wbga_bo} guarantees that the algorithm fully utilizes the constructed approximant $G_m$ to reduce the norm of remainder $f_m$.
Indeed, for any $\lambda \in \mathbb{R}$ we have
\[
	\|f_m - \lambda G_m\|
	\ge F_{f_m}(f_m - \lambda G_m)
	= \|f_m\|.
\]
The above estimate implies that the remainder $f_m$ is orthogonal to the approximant $G_m$ in the sense of Birkhoff--James orthogonality (see e.g.~\cite{birkhoff1935orthogonality}).
\end{Remark}

We now prove that the algorithms from Section~\ref{sec:dual_greedy}~--- the WCGA, the WGAFR, and the RWRGA~--- belong to the class $\mathcal{WBGA}$.
\begin{Proposition}\label{prp:dga_wbga}
The WCGA, the WGAFR, and the RWRGA belong to the class $\mathcal{WBGA}$.
\end{Proposition}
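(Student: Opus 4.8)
The plan is to verify conditions \ref{wbga_gs}--\ref{wbga_bo} for each of the three algorithms. The greedy selection condition \ref{wbga_gs} is immediate: in all three algorithms the first step is literally the statement that $F_{f_{m-1}}(\varphi_m) \ge t_m \|F_{f_{m-1}}\|_\D$ (with $f_{m-1}$ being $f^c_{m-1}$, $f^e_{m-1}$, or $f^r_{m-1}$ respectively), so there is nothing to prove. The remaining work is to check the error reduction \ref{wbga_er} and biorthogonality \ref{wbga_bo} conditions, and here the three algorithms require slightly different (though routine) arguments.

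For the WCGA, error reduction holds because $G_m^c$ minimizes $\|f - G\|$ over all of $\Phi_m^c = \sp\{\varphi_1^c,\dots,\varphi_m^c\}$, which in particular contains $G_{m-1}^c + \lambda \varphi_m^c$ for every $\lambda$; hence $\|f_m^c\| = \|f - G_m^c\| \le \inf_\lambda \|f - G_{m-1}^c - \lambda\varphi_m^c\| = \inf_\lambda \|f_{m-1}^c - \lambda\varphi_m^c\|$, and a fortiori the infimum over $\lambda \ge 0$. Biorthogonality is the standard optimality condition for a Chebyshev (best-approximation) projection onto a subspace: since $G_m^c$ is a minimizer of $\|f - G\|$ over the linear space $\Phi_m^c$, for any $G \in \Phi_m^c$ the function $t \mapsto \|f - G_m^c - tG\| = \|f_m^c - tG\|$ has a minimum at $t=0$, which forces $F_{f_m^c}(G) = 0$ (one differentiates, or uses the elementary inequality $\|f_m^c - tG\| \ge F_{f_m^c}(f_m^c - tG) = \|f_m^c\| - t F_{f_m^c}(G)$ valid for all $t$, both signs, to conclude $F_{f_m^c}(G)=0$); applying this with $G = G_m^c$ gives \ref{wbga_bo}.

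For the WGAFR, error reduction follows by taking the competitor $\omega = 0$ in the two-parameter minimization: then $(1-\omega)G_{m-1}^e + \lambda\varphi_m^e = G_{m-1}^e + \lambda\varphi_m^e$, so $\|f_m^e\| \le \inf_{\lambda} \|f - G_{m-1}^e - \lambda\varphi_m^e\| = \inf_{\lambda}\|f_{m-1}^e - \lambda\varphi_m^e\|$. Biorthogonality again comes from optimality: $G_m^e$ minimizes $\|f - ((1-\omega)G_{m-1}^e + \lambda\varphi_m^e)\|$; since $G_m^e = (1-\omega_m)G_{m-1}^e + \lambda_m\varphi_m^e$, perturbing $\omega_m$ to $\omega_m - s$ replaces the approximant by $G_m^e + s\,G_{m-1}^e$, so $s \mapsto \|f_m^e - s\,G_{m-1}^e\|$ is minimized at $s=0$, giving $F_{f_m^e}(G_{m-1}^e)=0$; similarly perturbing $\lambda_m$ gives $F_{f_m^e}(\varphi_m^e)=0$; hence $F_{f_m^e}(G_m^e) = (1-\omega_m)F_{f_m^e}(G_{m-1}^e) + \lambda_m F_{f_m^e}(\varphi_m^e) = 0$. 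For the RWRGA, error reduction is even more direct: step~2 produces $\lambda_m$ with $\|f_{m-1}^r - \lambda_m\varphi_m^r\| = \inf_{\lambda\ge 0}\|f_{m-1}^r - \lambda\varphi_m^r\|$, and then step~3 chooses $\mu_m$ to minimize $\|f - \mu(G_{m-1}^r + \lambda_m\varphi_m^r)\|$; taking $\mu=1$ as a competitor, $\|f_m^r\| \le \|f - (G_{m-1}^r + \lambda_m\varphi_m^r)\| = \|f_{m-1}^r - \lambda_m\varphi_m^r\| = \inf_{\lambda\ge0}\|f_{m-1}^r - \lambda\varphi_m^r\|$. Biorthogonality follows from the optimality of $\mu_m$: since $G_m^r = \mu_m h_m$ with $h_m := G_{m-1}^r + \lambda_m\varphi_m^r$, the function $\mu \mapsto \|f - \mu h_m\| = \|f_m^r - (\mu-\mu_m)h_m\|$ is minimized at $\mu=\mu_m$, so $F_{f_m^r}(h_m)=0$, and therefore $F_{f_m^r}(G_m^r) = \mu_m F_{f_m^r}(h_m) = 0$.

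The only mild subtlety — and the closest thing to an obstacle — is justifying the passage "minimizer over a real parameter $\Rightarrow$ the norming functional annihilates the direction" without assuming smoothness of $X$. This is handled uniformly by the one-line inequality $\|g - tv\| \ge F_g(g-tv) = \|g\| - t F_g(v)$, which holds for every $t \in \mathbb{R}$ (both signs) and any norming functional $F_g$ of $g$; if $\|g-tv\|$ attains its minimum at $t=0$ then for small $t$ of either sign the right-hand side cannot exceed $\|g\|$, forcing $F_g(v)=0$. I will state this once (it is essentially the second Remark above) and invoke it for all three algorithms. Everything else is bookkeeping. $\hfill\square$
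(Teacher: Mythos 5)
Your verification of the greedy selection condition~\ref{wbga_gs} and the error reduction condition~\ref{wbga_er} for all three algorithms is correct and essentially identical to the paper's argument, and your reduction of the biorthogonality condition~\ref{wbga_bo} to the statement ``if $t \mapsto \|f_m - t v\|$ is minimized at $t=0$, then $F_{f_m}(v)=0$'' follows the same structural route as the paper, which phrases it as ``$G_m$ is a best approximant from a finite-dimensional subspace'' and invokes Lemma~\ref{lem:orthogonality}. The problem is the justification you give for that key implication. From the inequality $\|g - tv\| \ge F_g(g - tv) = \|g\| - t F_g(v)$ together with minimality at $t=0$ (i.e.\ $\|g - tv\| \ge \|g\|$ for all $t$) nothing is ``forced'': the lower bound $\|g\| - tF_g(v)$ is weaker than $\|g\|$ for $t$ of one sign and the two inequalities are compatible for every value of $F_g(v)$, so the step is a non sequitur. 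Moreover, the claim that this works ``without assuming smoothness of $X$'' is false: in $(\mathbb{R}^2,\|\cdot\|_\infty)$ take $g=(1,1)$, $v=(1,-1)$ and the norming functional $F_g=(1,0)$; then $\|g-tv\|_\infty = 1+|t|$ is minimized at $t=0$, yet $F_g(v)=1\neq 0$. In a general Banach space, minimality at $t=0$ only yields Birkhoff--James orthogonality, i.e.\ the existence of \emph{some} norming functional annihilating $v$, not that the particular functional $F_{f_m}$ used in condition~\ref{wbga_bo} does.

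The implication you need does hold, but it genuinely uses smoothness: for a smooth (in particular uniformly smooth) space the convex function $t \mapsto \|f_m - tv\|$ is differentiable at $t=0$ with derivative $-F_{f_m}(v)$, and at an interior minimum this derivative must vanish. That is precisely the content of the paper's Lemma~\ref{lem:orthogonality}, which is stated for uniformly smooth $X$ and is what the paper's proof cites for all three algorithms. Note also that the correct ``one-line inequality'' argument runs in the opposite direction (as in the paper's second Remark): from $F_{f_m}(G_m)=0$ one deduces $\|f_m - \lambda G_m\| \ge \|f_m\|$, not conversely. So your proof is repaired simply by replacing your inequality argument with an appeal to smoothness of $X$ (or directly to Lemma~\ref{lem:orthogonality}); as written, the central step justifying~\ref{wbga_bo} is incorrect.
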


We will use the following well-known result.
\begin{Lemma}[{\cite[Lemma~6.9]{Tbook}}]\label{lem:orthogonality}
Let $X$ be a uniformly smooth Banach space and $L$ be a finite-dimensional subspace of $X$.
For any $f \in X \setminus L$ let $f_L$ denote the best approximant of $f$ from $L$.
Then for any $\phi \in L$ we have 
\[
	F_{f-f_L}(\phi) = 0.
\]
\end{Lemma}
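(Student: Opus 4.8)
The plan is to use a first-order optimality (variational) argument, exploiting that uniform smoothness makes the norm Gateaux-differentiable away from the origin. First I would record that the best approximant $f_L$ exists: since $L$ is finite-dimensional and hence closed, while the map $g \mapsto \n{f - g}$ is continuous and coercive on $L$, the infimum is attained on a sufficiently large closed ball of $L$, giving some $f_L \in L$ with $\n{f - f_L} = \inf_{g \in L} \n{f - g}$. Because $f \notin L$, the remainder $f - f_L$ is nonzero, so its norming functional $F_{f - f_L}$ exists by Hahn--Banach and, by smoothness of $X$, is unique.

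Next, fix $\phi \in L$ and consider the scalar function $h(u) := \n{(f - f_L) - u\phi}$ for $u \in \mathbb{R}$. Since $f_L + u\phi \in L$ for every $u$, the definition of the best approximant yields $h(u) = \n{f - (f_L + u\phi)} \ge \n{f - f_L} = h(0)$, so $h$ attains a global minimum at the interior point $u = 0$.

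The key step is to differentiate $h$ at $u = 0$. Uniform smoothness implies that the norm is Gateaux-differentiable at the nonzero point $f - f_L$, and by the derivative formula recalled earlier in the paper the directional derivative of the norm equals the value of the norming functional. Writing $h(u) = \n{(f-f_L) + u(-\phi)}$, this gives that $h$ is differentiable at $u = 0$ with $h'(0) = F_{f-f_L}(-\phi) = -F_{f-f_L}(\phi)$. Applying the first-order optimality condition at the minimizer $u = 0$ forces $h'(0) = 0$, whence $F_{f-f_L}(\phi) = 0$, as claimed.

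The main obstacle is conceptual rather than computational: one must invoke smoothness to upgrade the purely one-sided information (that $u=0$ minimizes the convex function $h$) into a genuine two-sided derivative, and then identify that derivative with the norming functional via the Gateaux-differentiability formula. Once that identification is in place the conclusion is immediate; the finite-dimensionality of $L$ is used only to guarantee existence of $f_L$, and is not otherwise essential to the orthogonality relation.
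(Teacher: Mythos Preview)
Your argument is correct. The paper does not actually prove this lemma; it merely quotes it from \cite[Lemma~6.9]{Tbook} and uses it as a black box in the proof of Proposition~\ref{prp:dga_wbga}. The variational argument you give---minimality of $u\mapsto\|f-f_L-u\phi\|$ at $u=0$, Gateaux-differentiability of the norm at the nonzero point $f-f_L$ via uniform smoothness, and identification of the derivative with the norming functional---is the standard proof of this fact and is essentially what appears in the cited reference.
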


\begin{proof}[Proof of Proposition~\ref{prp:dga_wbga}]
From the definitions of the corresponding algorithms it is clear that the greedy selection condition~\ref{wbga_gs} holds for all three algorithms.

For the WCGA / WGAFR the error reduction condition~\ref{wbga_er} follows from the fact that $G^c_m$ / $G^e_m$ is the best approximation to $f$ from the subspace $\sp\{\varphi^c_1,\ldots,\varphi^c_m\}$ / $\sp\{G^e_{m-1},\varphi^e_m\}$ respectively.
Evidently, the biorthogonality condition~\ref{wbga_bo} is guaranteed by the previous observation and Lemma~\ref{lem:orthogonality}.
\\
For the RWRGA we write
\begin{align*}
	\|f^r_m\|
	&= \inf_{\mu \in \mathbb{R}} \|f - \mu (G^r_{m-1} + \la_m \varphi^r_m)\| 
	\\
	&\le \|f - G^r_{m-1} - \lambda_m \varphi^r_m\|
	= \inf_{\lambda \ge 0} \|f_{m-1} - \lambda \varphi^r_m\|.
\end{align*}
Hence condition~\ref{wbga_er} holds and condition~\ref{wbga_bo} follows from Lemma~\ref{lem:orthogonality} and the observation that $G^r_m$ is the best approximation from $\sp\{G^r_{m-1} + \la_m \varphi^r_m\}$.
\end{proof}

We emphasize the advantage of analyzing the class $\mathcal{WBGA}$ by noting that any result proven for the class holds for each algorithm in the class.
For instance, it allows us to derive theoretical estimates on the behavior of the RWRGA, which is a novel algorithm and has not been studied previously.
In the next section we claim and prove the statements regarding the convergence and the rate of convergence for the $\mathcal{WBGA}$ and compare them to the known results for some algorithms.

\section{Convergence results for the $\mathcal{WBGA}$}\label{sec:wbga_convergence}

In the formulation of the convergence results for the $\mathcal{WBGA}$ we utilize a special sequence which is defined for a given modulus of smoothness $\rho(u)$ and a given weakness sequence $\tau = \{t_m\}_{m=1}^\infty$.
\begin{Definition}
For a given weakness sequence $\tau$ and a number $0 < \theta \le 1/2$ we define $\xi_m := \xi_m(\rho,\tau,\theta)$ as a root of the equation
\be\label{def:xi}
	\rho(u) = \theta t_m u.
\ee
\end{Definition}

\begin{Remark}
Note that a modulus of smoothness $\rho(u)$, defined in~\eqref{def:modulus_of_smoothness}, is an even convex function on $(-\infty,\infty)$ and $\rho(2) \ge 1$.
Hence the function
\[
	s(u) := 
		\left\{\begin{array}{ll}
			\rho(u)/u, & u \neq 0
			\\
			0, & u = 0
		\end{array}\right.
\]
is continuous and increasing on $[0,\infty)$ with $s(2) \ge 1/2$.
Thus the equation~\eqref{def:xi} has a unique solution $\xi_m = s^{-1}(\theta t_m)$ such that $0 < \xi_m \le 2$.  
\end{Remark}

We now state our main convergence result for the $\mathcal{WBGA}$.
\begin{Theorem}\label{thm:wbga_convergence}
Let $X$ be a uniformly smooth Banach space with the modulus of smoothness $\rho(u)$.
Let $\tau = \{t_m\}_{m=1}^\infty$ be such a sequence that for any $\theta > 0$ we have
\be\label{eq:sum_t_xi}
	\sum_{m=1}^\infty t_m \xi_m(\rho,\tau,\theta) = \infty.
\ee
Then an algorithm from the $\mathcal{WBGA}$ with the weakness sequence $\tau$ converges for any target element $f \in X$.
\end{Theorem}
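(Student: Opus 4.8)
The plan is to follow the classical template for convergence of weak greedy algorithms in uniformly smooth Banach spaces, adapted to the abstract conditions \ref{wbga_gs}--\ref{wbga_bo}. The starting point is the fundamental inequality relating consecutive remainders. Using the definition of the modulus of smoothness~\eqref{def:modulus_of_smoothness} applied to $x = f_{m-1}/\|f_{m-1}\|$ and $y = \varphi_m$, together with the error reduction property~\ref{wbga_er} (which lets me bound $\|f_m\|$ by $\inf_{\lambda\ge 0}\|f_{m-1}-\lambda\varphi_m\|$), I would derive for every $\lambda \ge 0$
\[
	\|f_m\| \le \|f_{m-1}\| \left( 1 + 2\rho\!\left(\frac{\lambda}{\|f_{m-1}\|}\right) \right) - \lambda F_{f_{m-1}}(\varphi_m).
\]
Then invoking the greedy selection~\ref{wbga_gs}, namely $F_{f_{m-1}}(\varphi_m) \ge t_m\|F_{f_{m-1}}\|_\D$, gives
\[
	\|f_m\| \le \|f_{m-1}\| \left( 1 + 2\rho\!\left(\frac{\lambda}{\|f_{m-1}\|}\right) - \lambda t_m \frac{\|F_{f_{m-1}}\|_\D}{\|f_{m-1}\|} \right).
\]

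The next step is to choose $\lambda$ to make the bracket strictly less than one. Writing $u = \lambda/\|f_{m-1}\|$ and setting $\theta = \tfrac12 \|F_{f_{m-1}}\|_\D / \|f_{m-1}\|$ (which lies in $(0,1/2]$ by the norming property and $\|g\|\le 1$), I would take $u = \xi_m = \xi_m(\rho,\tau,\theta)$, so that by the defining equation~\eqref{def:xi} we have $2\rho(\xi_m) = 2\theta t_m \xi_m = t_m \xi_m \|F_{f_{m-1}}\|_\D/\|f_{m-1}\|$, hence
\[
	\|f_m\| \le \|f_{m-1}\| \left( 1 - \tfrac12 t_m \xi_m \frac{\|F_{f_{m-1}}\|_\D}{\|f_{m-1}\|} \right) = \|f_{m-1}\| - \tfrac12 t_m \xi_m \|F_{f_{m-1}}\|_\D.
\]
In particular $\{\|f_m\|\}$ is nonincreasing, so it has a limit $\alpha \ge 0$, and I want to show $\alpha = 0$. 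Summing the displayed inequality over $m$ yields $\sum_m t_m \xi_m \|F_{f_{m-1}}\|_\D < \infty$. Since $\|f_m\| \ge \alpha$ for all $m$, if $\alpha > 0$ the $\xi_m$ are bounded below by $\xi_m(\rho,\tau,\theta_0)$ for a fixed $\theta_0$ depending on $\alpha$ (monotonicity of $s^{-1}$), so the divergence hypothesis~\eqref{eq:sum_t_xi} forces $\|F_{f_{m-1}}\|_\D \to 0$ along a subsequence, in fact $\liminf \|F_{f_{m-1}}\|_\D = 0$.

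The remaining work — and the step I expect to be the main obstacle — is to convert $\liminf_m \|F_{f_{m-1}}\|_\D = 0$ into $\|f_m\| \to 0$. This is where the biorthogonality condition~\ref{wbga_bo} enters: it gives $F_{f_m}(G_m) = 0$, hence $F_{f_m}(f) = F_{f_m}(f_m + G_m) = \|f_m\| $. The standard argument shows that $X = \csp\D$ together with $\|F_{f_{m_k}}\|_\D \to 0$ along a subsequence implies $F_{f_{m_k}}(h) \to 0$ for every $h \in X$ (approximate $h$ by finite linear combinations of dictionary elements, using $\|F_{f_m}\| \le 1$ to control the tail); applying this with $h = f$ gives $\|f_{m_k}\| = F_{f_{m_k}}(f) \to 0$, and monotonicity of $\|f_m\|$ upgrades this to the full sequence. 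The delicate point is handling the dependence of $\theta$ (and therefore of $\xi_m$) on the current remainder norm $\|f_{m-1}\|$: one must argue carefully that while $\alpha>0$ the relevant $\theta$ stays bounded away from $0$ so that $\sum t_m \xi_m$ still diverges, and that this is exactly what hypothesis~\eqref{eq:sum_t_xi} (quantified over all $\theta>0$) is designed to supply.
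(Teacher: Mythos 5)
Your overall architecture is sound, and its endgame is genuinely different from the paper's: the paper runs the contradiction $\|f_m\|\ge\alpha>0$ through its Error Reduction Lemma, using an approximant $f^\e$ of $f$ with $f^\e/A(\e)\in\A_1(\D)$ together with biorthogonality ($F_{f_{m-1}}(f)=\|f_{m-1}\|$) to bound $\|F_{f_{m-1}}\|_\D\ge A(\e)^{-1}(\|f_{m-1}\|-\e)$ away from zero, so a \emph{fixed} $\theta=\nicefrac{\a}{8A(\e)}$ works and the product $\prod(1-2\theta t_m\xi_m)$ forces the contradiction directly; you instead keep $\|F_{f_{m-1}}\|_\D$ as an unknown, sum the per-step decay to get $\liminf_m\|F_{f_{m-1}}\|_\D=0$, and then convert this into $\|f_m\|\to0$ via density of $\sp\D$ plus $F_{f_m}(f)=\|f_m\|$. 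That last conversion is correct and is a legitimate alternative to the paper's use of $f^\e$ inside the lemma.

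However, your key displayed inequality is wrong as written, and this is a genuine (though fixable) error. With $\theta=\tfrac12\|F_{f_{m-1}}\|_\D/\|f_{m-1}\|$ and $\xi_m$ the root of \eqref{def:xi}, you get $2\rho(\xi_m)=2\theta t_m\xi_m=t_m\xi_m\|F_{f_{m-1}}\|_\D/\|f_{m-1}\|$, which \emph{exactly cancels} the linear gain in the bracket $1+2\rho(\xi_m)-t_m\xi_m\|F_{f_{m-1}}\|_\D/\|f_{m-1}\|$; you obtain only $\|f_m\|\le\|f_{m-1}\|$, not $\|f_m\|\le\|f_{m-1}\|-\tfrac12 t_m\xi_m\|F_{f_{m-1}}\|_\D$, and the summation step collapses. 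Moreover, your claim that this $\theta$ lies in $(0,1/2]$ is unjustified: $\|F_{f_{m-1}}\|_\D\le1$ but need not be $\le\|f_{m-1}\|$, so $\theta$ can exceed $1/2$ and then $\xi_m(\rho,\tau,\theta)$ is outside the range of the paper's Definition. Both problems disappear if you take $\theta_m=\min\bigl\{\tfrac14\|F_{f_{m-1}}\|_\D/\|f_{m-1}\|,\ \tfrac12\bigr\}$: a short computation gives $\|f_m\|\le\|f_{m-1}\|-\tfrac12 t_m\xi_m(\rho,\tau,\theta_m)\|F_{f_{m-1}}\|_\D$ in both regimes, and in the contradiction step ($\liminf\|F_{f_{m-1}}\|_\D\ge c>0$) one has $\theta_m\ge\theta_0:=\min\{c/(8\|f\|),1/2\}$ using $\|f_{m-1}\|\le\|f\|$ (note: the lower bound on $\theta_m$ comes from a lower bound on $\|F_{f_{m-1}}\|_\D$, not from $\|f_m\|\ge\alpha$ as you state), whence monotonicity of $\xi_m$ in $\theta$ and \eqref{eq:sum_t_xi} with $\theta=\theta_0$ yield the contradiction. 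With these repairs your proof goes through; the paper's choice $\theta=\nicefrac{\a}{8A(\e)}$ avoids both issues automatically since $A(\e)\ge\|f^\e\|\ge\a/2$ there.
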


Condition~\eqref{eq:sum_t_xi} essentially holds for weakness sequences $\tau$ that do not decay rapidly.
We demonstrate this idea with with the following corollary. 
\begin{Corollary}\label{cor:wbga_convergence}
Let $X$ be a uniformly smooth Banach space with the modulus of smoothness $\rho(u)$ of power type $1 < q \le 2$.
Assume that 
\be\label{eq:sum_t_m^p}
	\sum_{m=1}^\infty t_m^p = \infty,
	\quad
	p = \frac{q}{q-1}. 
\ee
Then an algorithm from the $\mathcal{WBGA}$ with the weakness sequence $\tau$ converges for any target element $f \in X$.
\end{Corollary}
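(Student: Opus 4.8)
The plan is to deduce this corollary directly from Theorem~\ref{thm:wbga_convergence}: it suffices to check that the power-type bound $\rho(u) \le \gamma u^q$ together with the assumption~\eqref{eq:sum_t_m^p} forces condition~\eqref{eq:sum_t_xi} to hold for every $\theta > 0$. So fix an arbitrary $\theta \in (0, 1/2]$ (it is enough to treat small $\theta$, since~\eqref{eq:sum_t_xi} is demanded for all positive $\theta$). Recall from the definition that $\xi_m = \xi_m(\rho,\tau,\theta)$ is the unique root of $\rho(u) = \theta t_m u$, equivalently $s(\xi_m) = \theta t_m$, where $s(u) = \rho(u)/u$ is the continuous increasing function from the Remark preceding the theorem.

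The key step is a lower bound on $\xi_m$ obtained by inverting the power-type estimate. From $\rho(u) \le \gamma u^q$ we get $s(u) \le \gamma u^{q-1}$ for $u > 0$; evaluating at $u = \xi_m$ gives $\theta t_m = s(\xi_m) \le \gamma \xi_m^{q-1}$, hence
\[
	\xi_m \ge \left( \frac{\theta t_m}{\gamma} \right)^{1/(q-1)}.
\]
Multiplying through by $t_m$ and using $1 + \tfrac{1}{q-1} = \tfrac{q}{q-1} = p$ yields
\[
	t_m \xi_m \ge \left( \frac{\theta}{\gamma} \right)^{1/(q-1)} t_m^{\,p}.
\]
Summing over $m$ and invoking~\eqref{eq:sum_t_m^p} gives $\sum_{m=1}^\infty t_m \xi_m \ge (\theta/\gamma)^{1/(q-1)} \sum_{m=1}^\infty t_m^{\,p} = \infty$. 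Since $\theta > 0$ was arbitrary, condition~\eqref{eq:sum_t_xi} is verified, and Theorem~\ref{thm:wbga_convergence} then yields convergence of any $\mathcal{WBGA}$ algorithm with weakness sequence $\tau$ for every $f \in X$.

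I do not expect any genuine obstacle here: the only substantive ingredient is the elementary inversion of the modulus-of-smoothness bound to estimate $\xi_m$ from below, and everything else is bookkeeping with the conjugate exponent $p$. The one point worth stating with care is that $\gamma$ and $q$ are fixed by the geometry of $X$ while $\theta$ ranges over all positive reals, so that $(\theta/\gamma)^{1/(q-1)}$ is, for each fixed $\theta$, a strictly positive constant and therefore harmless for the divergence of the series.
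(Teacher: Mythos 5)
Your proof is correct and follows essentially the same route as the paper: the paper compares $\xi_m(\rho,\tau,\theta)$ with the explicitly computable root $\xi_m(\rho^q,\tau,\theta)=(\theta t_m/\gamma)^{1/(q-1)}$ for $\rho^q(u)=\gamma u^q$, which is the same lower bound $\xi_m \ge (\theta t_m/\gamma)^{1/(q-1)}$ that you obtain by inverting $s(\xi_m)=\theta t_m \le \gamma \xi_m^{q-1}$ directly. The remaining bookkeeping with $t_m\xi_m \gtrsim t_m^p$ and the appeal to Theorem~\ref{thm:wbga_convergence} matches the paper's argument.
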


Another important result of the paper stated below describes the rate of convergence of the $\mathcal{WBGA}$. 
\begin{Theorem}\label{thm:wbga_convergence_rate}
Let $X$ be a uniformly smooth Banach space with the modulus of smoothness $\rho(u)$ of power type $1 < q \le 2$.
Let $f \in X$ be a target element.
Take a number $\e \ge 0$ and an $f^\e$ from $X$ such that
\[
	\|f - f^\e\| \le \e,
	\quad
	f^\e / A(\e) \in \A_1(\D)
\]
with some number $A(\e) > 0$.
Then for an algorithm from the $\mathcal{WBGA}$ we have
\[
	\|f_m\| \le \max \left\{ 2\e,\, C(q,\gamma) (A(\e) + \e) \Big(1 + \sum_{k=1}^m t_k^p \Big)^{-1/p} \right\},
\]
where $C(q,\gamma)= 4(2\gamma)^{1/q}$ and $p = q/(q-1)$.
\end{Theorem}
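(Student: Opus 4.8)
The plan is to combine the three defining properties \ref{wbga_gs}--\ref{wbga_bo} of the $\mathcal{WBGA}$ into a single recursive inequality for $a_m := \|f_m\|$, and then to solve that recursion. The engine is the standard smoothness estimate: for a nonzero $g \in X$ and any scalar $\lambda$,
\[
	\|g - \lambda \varphi\| \le \|g\| \Big( 1 - \lambda \frac{F_g(\varphi)}{\|g\|} + 2 \rho\Big( \frac{|\lambda|}{\|g\|} \Big) \Big),
\]
which follows directly from the definition~\eqref{def:modulus_of_smoothness} of $\rho$. First I would apply the error reduction property~\ref{wbga_er} with $g = f_{m-1}$ and $\varphi = \varphi_m$, so that for every $\lambda \ge 0$
\[
	\|f_m\| \le \|f_{m-1}\| \Big( 1 - \lambda \frac{F_{f_{m-1}}(\varphi_m)}{\|f_{m-1}\|} + 2\rho\Big( \frac{\lambda}{\|f_{m-1}\|} \Big) \Big).
\]
Now I would use the greedy selection property~\ref{wbga_gs} to replace $F_{f_{m-1}}(\varphi_m)$ by $t_m \|F_{f_{m-1}}\|_\D$ from below.

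The next step is to lower-bound $\|F_{f_{m-1}}\|_\D$ in terms of the current error and the target. Here the biorthogonality property~\ref{wbga_bo} enters: since $F_{f_{m-1}}(G_{m-1}) = 0$, for any $\phi \in \mathcal{A}_1(\D)$ we have $F_{f_{m-1}}(f_{m-1}) = F_{f_{m-1}}(f - G_{m-1}) = F_{f_{m-1}}(f - \phi) + F_{f_{m-1}}(\phi - G_{m-1}) + F_{f_{m-1}}(\phi) - F_{f_{m-1}}(\phi)$; more cleanly, writing $f = f^\e + (f - f^\e)$ and $f^\e = A(\e) h$ with $h \in \mathcal{A}_1(\D)$, one gets
\[
	\|f_{m-1}\| = F_{f_{m-1}}(f_{m-1}) = F_{f_{m-1}}(f - f^\e) + A(\e) F_{f_{m-1}}(h) - F_{f_{m-1}}(G_{m-1})
	\le \e + A(\e) \|F_{f_{m-1}}\|_\D,
\]
using $\|F_{f_{m-1}}\| \le 1$ and that $\sup_{g\in\mathcal{A}_1(\D)} F(g) = \|F\|_\D$. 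Hence $\|F_{f_{m-1}}\|_\D \ge (\|f_{m-1}\| - \e)/(A(\e) + \e)$ (replacing $A(\e)$ by $A(\e)+\e$ in the denominator for a uniform bound), which is meaningful precisely when $\|f_{m-1}\| > \e$; if $\|f_{m-1}\| \le 2\e$ at some stage then, since $\{\|f_m\|\}$ is nonincreasing by~\ref{wbga_er} (take $\lambda = 0$), the bound $\|f_m\| \le 2\e$ holds for all later $m$ and we are done. So assume $\|f_{m-1}\| > 2\e$ throughout.

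Substituting, with $B := A(\e) + \e$, for all $\lambda \ge 0$:
\[
	\|f_m\| \le \|f_{m-1}\| - \lambda t_m \frac{\|f_{m-1}\| - \e}{B} + 2 \|f_{m-1}\| \rho\Big( \frac{\lambda}{\|f_{m-1}\|} \Big).
\]
Now I would optimize in $\lambda$: the natural choice, given the power type $\rho(u) \le \gamma u^q$, is $\lambda = \|f_{m-1}\| u$ with $u$ chosen so that the linear gain dominates the $\gamma u^q$ term — concretely $u^{q-1} \sim t_m (\|f_{m-1}\|-\e)/(\gamma B \|f_{m-1}\|)$, which since $\|f_{m-1}\| > 2\e$ gives a gain of order $t_m^p (\text{something})/B^{p-1}$ times $\|f_{m-1}\|$, where $p = q/(q-1)$. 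Keeping careful track of constants (this is where the factor $4(2\gamma)^{1/q}$ materializes), one arrives at a recursion of the shape
\[
	\|f_m\| \le \|f_{m-1}\| \Big( 1 - c\, t_m^p \Big(\frac{\|f_{m-1}\|}{B}\Big)^{p-1} \Big) + (\text{correction from the }\e\text{ terms}),
\]
valid while $\|f_{m-1}\| > 2\e$. The last step is a by-now-standard lemma on sequences: if $a_m \le a_{m-1}(1 - c\, t_m^p\, a_{m-1}^{p-1})$ with $a_0$ bounded, then $a_m \lesssim (c \sum_{k\le m} t_k^p)^{-1/p}$; I would invoke (or reprove in a line) this recursive inequality to conclude $\|f_m\| \le C(q,\gamma)(A(\e)+\e)(1 + \sum_{k=1}^m t_k^p)^{-1/p}$, and combining with the $\|f_m\| \le 2\e$ alternative gives the stated maximum.

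\textbf{Main obstacle.} The conceptually routine part is the smoothness estimate and the scalar recursion lemma; the delicate part is bookkeeping the $\e$-perturbation uniformly. Because the lower bound $\|F_{f_{m-1}}\|_\D \ge (\|f_{m-1}\|-\e)/B$ degrades as $\|f_{m-1}\| \downarrow \e$, one must dichotomize cleanly at the threshold $2\e$ (where $\|f_{m-1}\|-\e \ge \|f_{m-1}\|/2$, salvaging a clean multiplicative recursion) and argue that once below $2\e$ the error stays below $2\e$. Getting the explicit constant $C(q,\gamma) = 4(2\gamma)^{1/q}$ rather than merely some $C(q,\gamma)$ requires choosing the optimizing $\lambda$ exactly and not wasting factors when passing from the recursion to the closed-form rate — that is the only place real care is needed.
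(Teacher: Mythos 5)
Your overall route is exactly the paper's: the smoothness estimate from~\eqref{def:modulus_of_smoothness} combined with~\ref{wbga_gs} and~\ref{wbga_bo} (via Lemma~\ref{L2.2c}) to lower-bound $F_{f_{m-1}}(\varphi_m)$ by $t_m A(\e)^{-1}(\|f_{m-1}\|-\e)$, the dichotomy at the threshold $2\e$ using monotonicity of $\|f_m\|$, and then a multiplicative recursion solved by a standard sequence lemma; the paper merely packages the first half as its Error Reduction Lemma~\ref{lem:wbga_er}. Everything up to and including your displayed inequality
\[
	\|f_m\| \le \|f_{m-1}\| - \lambda t_m \frac{\|f_{m-1}\|-\e}{B} + 2\|f_{m-1}\|\,\rho\Big(\frac{\lambda}{\|f_{m-1}\|}\Big)
\]
is sound, and no additive ``correction from the $\e$ terms'' is needed afterwards: once $\|f_{m-1}\|>2\e$ the whole effect of $\e$ is the factor $\tfrac12$.

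The gap is in the optimization step and the recursion you feed into the ``standard lemma.'' Writing $u=\lambda/\|f_{m-1}\|$, balancing $u\,t_m\|f_{m-1}\|/(2B)$ against $2\gamma u^q$ gives $u^{q-1}\sim t_m\|f_{m-1}\|/(\gamma B)$ (not your $u^{q-1}\sim t_m(\|f_{m-1}\|-\e)/(\gamma B\|f_{m-1}\|)$, which drops the $\|f_{m-1}\|$ dependence and is suboptimal when $\|f_{m-1}\|\ll B$), and the resulting relative gain is of order $t_m^p(\|f_{m-1}\|/B)^p$ --- exponent $p$, not $p-1$. Correspondingly, the recursion lemma you quote is false as stated: $a_m\le a_{m-1}(1-c\,t_m^p a_{m-1}^{p-1})$ yields decay of order $\big(\sum_{k\le m}t_k^p\big)^{-1/(p-1)}$, not $\big(\cdot\big)^{-1/p}$. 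With the correct exponent one proceeds as the paper does: from $\|f_k\|\le\|f_{k-1}\|\big(1-c\,t_k^p\|f_{k-1}\|^p B^{-p}\big)$ raise to the power $p$, use $x^r\le x$ for $r\ge1$, $0\le x\le1$, and apply Lemma~3.4 of~\cite{DTe} together with $\|f_0\|\le A(\e)+\e$ to obtain $\big(1+\sum_{k=1}^m t_k^p\big)^{-1/p}$ with the constant $4(2\gamma)^{1/q}$. So the plan is the right one and matches the paper, but as written the final step does not deliver the stated rate until the exponent is corrected.
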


This result indicates a pragmatic advantage of the class $\mathcal{WBGA}$: it does not use any {\it a priori} information about the target element $f$ or the space $X$ and automatically adjusts to its smoothness property.
Moreover, the above formulation of Theorem~\ref{thm:wbga_convergence_rate} covers the case of noisy data since $f$ can be viewed as a noisy version of $f^\e$.

Theorem~\ref{thm:wbga_convergence_rate} shows that the rate of convergence of an algorithm depends on how close the target element is to the closure of the convex hull of the elements of the dictionary, which is a known fact in the area of greedy approximation.
The following corollary gives an estimate on the rate of convergence for a target element from the class $\A_1(\D)$.
\begin{Corollary}\label{cor:wbga_convergence_rate}
Let $X$ be a uniformly smooth Banach space with the modulus of smoothness $\rho(u)$ of power type $1 < q \le 2$.
Then for any target element $f \in \A_1(\D)$ an algorithm from the $\mathcal{WBGA}$ provides
\[
	\|f_m\| \le C(q,\gamma) \Big(1 + \sum_{k=1}^m t_k^p \Big)^{-1/p},
\]
where $C(q,\gamma)= 4(2\gamma)^{1/q}$ and $p = q/(q-1)$.
\end{Corollary}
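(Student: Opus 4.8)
The plan is to obtain Corollary~\ref{cor:wbga_convergence_rate} as an immediate specialization of Theorem~\ref{thm:wbga_convergence_rate}, with essentially no work beyond choosing the free parameters appropriately. The key point is that the statement of Theorem~\ref{thm:wbga_convergence_rate} is already designed to cover the ``exact'' situation $f \in \A_1(\D)$ by setting the noise level $\e$ to zero.

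Concretely, I would apply Theorem~\ref{thm:wbga_convergence_rate} to the given target element $f$ with the choices $\e := 0$, $f^\e := f$, and $A(\e) := 1$. Since $f \in \A_1(\D)$, we have $\|f - f^\e\| = 0 \le \e$ and $f^\e / A(\e) = f \in \A_1(\D)$, so the hypotheses of the theorem are satisfied. Substituting these values into the conclusion of Theorem~\ref{thm:wbga_convergence_rate}, the first entry of the maximum becomes $2\e = 0$, the prefactor $A(\e) + \e$ equals $1$, while the constant $C(q,\gamma) = 4(2\gamma)^{1/q}$ and the exponent $p = q/(q-1)$ are the same in both statements; this yields precisely
\[
	\|f_m\| \le C(q,\gamma)\Big(1 + \sum_{k=1}^m t_k^p\Big)^{-1/p}.
\]

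Since the argument is a direct substitution, there is no real obstacle to overcome. The only point worth a remark is the degenerate case $f = 0$: here $0 \in \A_1(\D)$ because the dictionary is symmetric (for any $g \in \D$ one has $\tfrac12 g + \tfrac12(-g) = 0$), and any algorithm from the $\mathcal{WBGA}$ then produces $f_m = 0$ for all $m$, so the claimed inequality holds trivially. Hence the corollary holds for every $f \in \A_1(\D)$.
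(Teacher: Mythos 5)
Your proposal is correct and matches the paper's intent exactly: Corollary~\ref{cor:wbga_convergence_rate} is stated without a separate proof precisely because it is the specialization of Theorem~\ref{thm:wbga_convergence_rate} with $\e = 0$, $f^\e = f$, and $A(\e) = 1$, which is what you do. The extra remark about the degenerate case $f = 0$ is fine but not needed, since the theorem's bound already covers it trivially.
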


\begin{Remark}
Theorems~\ref{thm:wbga_convergence} and~\ref{thm:wbga_convergence_rate} are known for the WCGA (see~\cite{T15}) and the WGAFR (see~\cite{T26}).
For the RWRGA stated results are novel.
\end{Remark}

\section{Proofs of results from Section~\ref{sec:wbga_convergence}}\label{sec:wbga_proofs}

First, we prove the convergence results for the $\mathcal{WBGA}$.
We start with the well-known lemma.
\begin{Lemma}[{\cite[Lemma~6.10]{Tbook}}]\label{L2.2c}
For any bounded linear functional $F$ and any dictionary $\D$ we have
\[
	\|F\|_\D = \sup_{g \in \D} F(g) = \sup_{f \in \A_1(\D)} F(f).
\]
\end{Lemma}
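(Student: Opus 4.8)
The plan is to split the claimed chain of equalities into its two halves. The first equality $\|F\|_\D=\sup_{g\in\D}F(g)$ is nothing but the definition of the norm $\|\cdot\|_\D$ given in Section~\ref{sec:overview}, so the real content is the identity $\sup_{g\in\D}F(g)=\sup_{f\in\A_1(\D)}F(f)$. One inequality is free: since every $g\in\D$ belongs to the convex hull of $\D$, and hence to its closure $\A_1(\D)$, we immediately get $\sup_{g\in\D}F(g)\le\sup_{f\in\A_1(\D)}F(f)$.

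For the reverse inequality I would set $M:=\sup_{g\in\D}F(g)$ and first bound $F$ on the (not yet closed) convex hull of $\D$. Any element there has the form $h=\sum_{i=1}^n\lambda_i g_i$ with $g_i\in\D$, $\lambda_i\ge0$, and $\sum_{i=1}^n\lambda_i=1$; by linearity of $F$,
\[
	F(h)=\sum_{i=1}^n\lambda_i F(g_i)\le\Big(\sum_{i=1}^n\lambda_i\Big)M=M .
\]
Thus $F\le M$ on the convex hull of $\D$. Since $F$ is a bounded, hence continuous, functional and $\A_1(\D)$ is by definition the closure of that convex hull, approximating an arbitrary $f\in\A_1(\D)$ by a sequence $h_k$ from the convex hull with $h_k\to f$ gives $F(f)=\lim_k F(h_k)\le M$, so $\sup_{f\in\A_1(\D)}F(f)\le M$. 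Combined with the trivial inequality above, this yields the desired equality.

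There is essentially no genuine obstacle here: the only point requiring a word of care is the passage from the convex hull to its closure, which is handled purely by continuity of $F$. The argument uses nothing about the geometry of $X$ beyond boundedness of the functional, and in particular does not invoke smoothness of the norm or symmetry of $\D$.
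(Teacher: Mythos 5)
Your proof is correct and complete: the definition of $\|\cdot\|_\D$ gives the first equality, linearity plus convexity gives the bound on the convex hull, and continuity of the bounded functional $F$ passes it to the closure $\A_1(\D)$. The paper does not prove this lemma at all but simply cites it from \cite[Lemma~6.10]{Tbook}, and your argument is exactly the standard one behind that reference, so there is nothing to add.
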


The following lemma is our main tool for analyzing the behavior of algorithms from the $\mathcal{WBGA}$.
\begin{Lemma}[{{\bf Error Reduction Lemma}}]\label{lem:wbga_er}
Let $X$ be a uniformly smooth Banach space with the modulus of smoothness $\rho(u)$.
Take a number $\e \ge 0$ and two elements $f$, $f^\e$ from $X$ such that
\[
	\|f - f^\e\| \le \e,
	\quad
	f^\e / A(\e) \in \A_1(\D)
\]
with some number $A(\e) \ge \e$.
Then for an algorithm from the $\mathcal{WBGA}$ we have
\[
	\|f_m\| \le \|f_{m-1}\| \inf_{\la\ge0} \left(1 - \la t_m A(\e)^{-1} 
		\left(1 - \frac{\e}{\|f_{m-1}\|}\right) + 2\rho\left(\frac{\la}{\|f_{m-1}\|}\right)\right)
\]
for any $m \ge 1$.
\end{Lemma}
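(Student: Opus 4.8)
The plan is to start from the error reduction property~\ref{wbga_er}, which already gives $\|f_m\|\le\inf_{\la\ge0}\|f_{m-1}-\la\ff_m\|$, so that it suffices to bound $\|f_{m-1}-\la\ff_m\|$ from above for each fixed $\la\ge0$. Throughout we may assume $f_{m-1}\ne0$, since otherwise $f_m=0$ by~\ref{wbga_er} and the claim is trivial.

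The first ingredient is the elementary smoothness estimate: for any nonzero $x\in X$ and any $y\in X$,
\[
	\|x-y\| \le \|x\| - F_x(y) + 2\|x\|\,\rho\!\left(\frac{\|y\|}{\|x\|}\right),
\]
which one obtains by rescaling $x$ and $y$ to unit vectors in the definition~\eqref{def:modulus_of_smoothness} of $\rho$ to get $\|x+y\|+\|x-y\|\le 2\|x\|(1+\rho(\|y\|/\|x\|))$, and then subtracting the lower bound $\|x+y\|\ge F_x(x+y)=\|x\|+F_x(y)$. Applying this with $x=f_{m-1}$ and $y=\la\ff_m$, using $\|\ff_m\|\le1$ and the fact that $\rho$ is nondecreasing on $[0,\infty)$, we get
\[
	\|f_{m-1}-\la\ff_m\| \le \|f_{m-1}\| - \la F_{f_{m-1}}(\ff_m) + 2\|f_{m-1}\|\,\rho\!\left(\frac{\la}{\|f_{m-1}\|}\right).
\]

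The second ingredient is a lower bound for $F_{f_{m-1}}(\ff_m)$. By the greedy selection~\ref{wbga_gs}, $F_{f_{m-1}}(\ff_m)\ge t_m\|F_{f_{m-1}}\|_\D$. Since $f^\e/A(\e)\in\A_1(\D)$, Lemma~\ref{L2.2c} gives $\|F_{f_{m-1}}\|_\D\ge A(\e)^{-1}F_{f_{m-1}}(f^\e)$, and $\|f-f^\e\|\le\e$ together with $\|F_{f_{m-1}}\|=1$ gives $F_{f_{m-1}}(f^\e)\ge F_{f_{m-1}}(f)-\e$. The key point is the biorthogonality~\ref{wbga_bo} at step $m-1$ (trivial for $m=1$ since $G_0=0$): writing $f=f_{m-1}+G_{m-1}$ we obtain $F_{f_{m-1}}(f)=\|f_{m-1}\|+F_{f_{m-1}}(G_{m-1})=\|f_{m-1}\|$. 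Combining,
\[
	F_{f_{m-1}}(\ff_m) \ge t_m A(\e)^{-1}\big(\|f_{m-1}\|-\e\big) = t_m A(\e)^{-1}\|f_{m-1}\|\left(1-\frac{\e}{\|f_{m-1}\|}\right).
\]

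Substituting this into the previous display, factoring out $\|f_{m-1}\|$, and taking the infimum over $\la\ge0$ yields the assertion. I do not expect a genuine obstacle here; the computation is routine once the three defining properties~\ref{wbga_gs}--\ref{wbga_bo} are combined in this order. The only subtle point is recognizing that biorthogonality is precisely what converts $F_{f_{m-1}}(f_{m-1})$ into $F_{f_{m-1}}(f)$, which is what lets us bring in the dictionary (via $f^\e$ and Lemma~\ref{L2.2c}); some minor care is also needed with the $\e$-perturbation bookkeeping and with the degenerate regime $\|f_{m-1}\|\le\e$, where the stated inequality remains valid though vacuous.
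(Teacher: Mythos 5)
Your proposal is correct and follows essentially the same route as the paper's own proof: reduce via the error-reduction property to bounding $\|f_{m-1}-\la\ff_m\|$, use the modulus-of-smoothness inequality together with $\|f_{m-1}+\la\ff_m\|\ge F_{f_{m-1}}(f_{m-1}+\la\ff_m)$, then lower-bound $F_{f_{m-1}}(\ff_m)$ via the greedy selection, Lemma~\ref{L2.2c}, the $\e$-perturbation, and biorthogonality giving $F_{f_{m-1}}(f)=\|f_{m-1}\|$. Your extra remarks (the degenerate case $f_{m-1}=0$, the trivial $m=1$ case, and the explicit use of $\|\ff_m\|\le 1$ with monotonicity of $\rho$) are just more careful bookkeeping of the same argument.
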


\begin{proof}[Proof of Lemma~\ref{lem:wbga_er}]
Due to the error reduction property~\ref{wbga_er} it is sufficient to bound the quantity $\inf_{\la\ge 0} \|f_{m-1} - \la \ff_m\|$.
From the definition of modulus of smoothness~\eqref{def:modulus_of_smoothness} we have for any $\la \ge 0$
\[
	\|f_{m-1 } - \la \ff_m\| + \|f_{m-1}+\la\ff_m\| \le 2\|f_{m-1}\|\left(1+\rho(\la/\|f_{m-1}\|)\right).
\]
Next,
\[
	\|f_{m-1} + \la \ff_m\| \ge F_{f_{m-1}}(f_{m-1} + \la \ff_m)
	= \|f_{m-1}\| + \la F_{f_{m-1}}(\ff_m).
\]
By the greedy selection condition~\ref{wbga_gs} and Lemma~\ref{L2.2c} we get
\begin{align*}
	F_{f_{m-1}}(\ff_m)
	&\ge t_m \sup_{g\in \D} F_{f_{m-1}}(g)
	= t_m \sup_{\phi\in \A_1(\D)} F_{f_{m-1}}(\phi)
	\\
	&\ge t_m A(\e)^{-1} F_{f_{m-1}}(f^\e)
	\ge t_m A(\e)^{-1} (F_{f_{m-1}}(f) - \e).
\end{align*}
The biorthoganality property~\ref{wbga_bo} provides 
\[
	F_{f_{m-1}}(f) = F_{f_{m-1}}(f_{m-1} + G_{m-1})
	= F_{f_{m-1}}(f_{m-1})
	= \|f_{m-1}\|.
\]
Combining the provided estimates completes the proof.
\end{proof}

Next, we prove the general convergence result for the $\mathcal{WBGA}$.
\begin{proof}[Proof of Theorem~\ref{thm:wbga_convergence}]
The Error Reduction Lemma~\ref{lem:wbga_er} implies that $\{\|f_m\|\}_{m=0}^\infty$ is a non-increasing sequence.
Therefore, we have
\[
	\lim_{m \to \infty} \|f_m\| = \a.
\]
We prove that $\a = 0$ by contradiction.
Assume that for any $m$ we have
\[
	\|f_m\| \ge \a > 0.
\]
We set $\e = \a/2$ and find $f^\e$ such that
\[
	\|f - f^\e\| \le \e
	\quad\text{and}\quad
	f^\e / A(\e) \in \A_1(\D)
\]
with some $A(\e)$.
Then, by the Error Reduction Lemma~\ref{lem:wbga_er} we get
\[
	\|f_m\| \le \|f_{m-1}\| \inf_{\la \ge 0} (1 - \la t_m A(\e)^{-1}/2 + 2\rho(\la/\a)).
\]
We specify $\theta := \nicefrac{\a}{8A(\e)}$ and take $\la = \a \xi_m(\rho,\tau,\theta)$, where $\xi_m$ is the solution of~\eqref{def:xi}, and obtain
\[
	\|f_m\| \le \|f_{m-1}\| (1 - 2\theta t_m\xi_m).
\]
Then the assumption~\eqref{eq:sum_t_xi}
\[
	\sum_{m=1}^\infty t_m \xi_m = \infty
\]
implies the contradiction
\[
	\|f_m\| \to 0
	\ \ \text{as}\ \ 
	m \to \infty.
\]
\end{proof}

\begin{proof}[Proof of Corollary~\ref{cor:wbga_convergence}]
Denote $\rho^q(u) = \gamma u^q$.
Then
\[
	\rho(u)/u \le \rho^q(u)/u
\]
and therefore for any $\theta > 0$ we have
\[
	\xi_m(\rho,\tau,\theta) \ge \xi_m(\rho^q,\tau,\theta).
\]
For $\rho^q(u)$ we get from the definition of $\xi_m$~\eqref{def:xi} that
\[
	\xi_m(\rho^q,\tau,\theta) = (\theta t_m/\gamma)^{\frac{1}{q-1}}.
\]
Thus condition~\eqref{eq:sum_t_m^p} implies that
\[
	\sum_{m=1}^\infty t_m \xi_m(\rho,\tau,\theta)
	\ge \sum_{m=1}^\infty t_m \xi_m(\rho^q,\tau,\theta)
	\asymp \sum_{m=1}^\infty t_m^p
	= \infty.
\]
It remains to apply Theorem~\ref{thm:wbga_convergence}.
\end{proof}

Finally, we prove the rate of convergence for algorithms from the $\mathcal{WBGA}$.
\begin{proof}[Proof of Theorem~\ref{thm:wbga_convergence_rate}]
Fix an $m \ge 1$.
Note that the required estimate holds trivially if $A(\e) \le \e$ or $\|f_m\| \le 2\e$.
Consider the case $A(\e) > \e$ and $\|f_m\| > 2\e$.
Then due to monotonicity of the norms of remainders $f_k$ we get that $\|f_k\| > 2\e$ for any $k \le m$. 
Hence the Error Reduction Lemma~\ref{lem:wbga_er} provides
\be\label{3.9n}
	\|f_k\| \le \|f_{k-1}\| \inf_{\la\ge0} \left(1 - \frac{\la t_k A(\e)^{-1}}{2} + 2\gamma\left(\frac{\la}{\|f_{k-1}\|}\right)^q\right).
\ee
Take $\la$ to be the root of the equation
\[
	\frac{\la t_k}{4A(\e)} = 2\gamma \left(\frac{\la}{\|f_{k-1}\|}\right)^q
	\ \ \text{i.e.}\ \ 
	\la = \|f_{k-1}\|^{\frac{q}{q-1}} \pr{\frac{t_k}{8\gamma A(\e)}}^{\frac{1}{q-1}}.
\]
Denote $A_q := 4(8\gamma)^{\nicefrac{1}{q-1}}$, then we get from~\eqref{3.9n}
\[
	\|f_k\| \le \|f_{k-1}\| \left(1 - \frac{1}{4} \frac{\la t_k}{A(\e)}\right)
	= \|f_{k-1}\| \left(1 - \frac{t_k^p \|f_{k-1}\|^p}{A_q A(\e)^p}\right).
\]
Raising both sides of this inequality to the power $p$ and taking into account the inequality $x^r\le x$ for $r\ge 1$, $0\le x\le 1$, we get
\[
	\|f_k\|^p \le \|f_{k-1}\|^p \left(1 - \frac{t^p_k\|f_{k-1}\|^p}{A_q A(\e)^p}\right).
\]
Then by using the estimates $\|f\| \le A(\e) + \e$ and $A_q > 1$, we obtain (see e.g.~\cite[Lemma~3.4]{DTe})
\[
	\|f_m\|^p \le A_q(A(\e)+\e)^p \left(1 + \sum_{k=1}^m t_k^p\right)^{-1},
\]
which implies
\[
	\|f_m\|\le C(q,\gamma)(A(\e) + \e) \left(1 + \sum_{k=1}^m t_k^p\right)^{-1/p}
\]
with $C(q,\gamma) = A_q^{1/p}= 4(2\gamma)^{1/q}$.
\end{proof}

\section{Approximate Weak Biorthogonal Greedy Algorithms}\label{sec:awbga}

In this section we address the issue of numerical stability of the algorithms from the class $\mathcal{WBGA}$ by considering a wider class that we call the Approximate Weak Biorthogonal Greedy Algorithms introduced in Section~\ref{sec:overview}.

The algorithms from the $\mathcal{AWBGA}$ are allowed to perform the steps not exactly but with some computational inaccuracies.
Such numerical inaccuracies are controlled by the weakness sequence $\{t_m\}_{m=1}^\infty$, and by the error sequences $\{\delta_m\}_{m=0}^\infty$, $\{\eta_m\}_{m=1}^\infty$, and $\{\e_m\}_{m=1}^\infty$, where the first three sequences consist of numbers from the interval $[0,1]$ and the last one is of non-negative numbers.
By $\{F_m\}_{m=0}^\infty$ we denote a sequence of functionals that satisfy for any $m \geq 0$ the conditions
\[
	\|F_m\| \leq 1
	\ \text{ and }\ 
	F_m(f_m) \geq (1 - \delta_m) \|f_m\|,
\]
where $\{f_m\}_{m=0}^\infty$ is the sequence of remainders produced by the algorithm.

We say that an algorithm belongs to the class $\mathcal{AWBGA}$ if sequences of remainders $\{f_m\}_{m=0}^\infty$, approximators $\{G_m\}_{m=0}^\infty$, and selected elements $\{\varphi_m\}_{m=1}^\infty$ satisfy the following conditions at every iteration $m \ge 1$:
\begin{enumerate}[label=\bf(\arabic*), leftmargin=.5in]
	\item\label{awbga_gs}
		Greedy selection: ${\displaystyle F_{m-1}(\varphi_m  ) \ge t_m \|F_{m-1}\|_\D}$;
	\item\label{awbga_er}
		Error reduction: ${\displaystyle \|f_m\| \le (1 + \eta_m) \inf_{\la\ge 0}\|f_{m-1} -\la \ff_m\|}$;
	\item\label{awbga_bo}
		Biorthogonality: ${\displaystyle |F_m(G_m)| \le \e_m}$.
\end{enumerate}

We now state the approximate versions of the algorithms from Section~\ref{sec:dual_greedy} (namely, the WCGA, the WGAFR, and the RWRGA) and show that they belong to the class $\mathcal{AWBGA}$.

We start with an approximate version of the WCGA, which was studied in~\cite{T7} and~\cite{De}.
A more general version with both relative and absolute inaccuracies was considered in~\cite{dereventsov2017generalized}.

\bigskip\noindent
{\bf Approximate Weak Chebyshev Greedy Algorithm (AWCGA)\\}
Set $f^c_0 := f$ and $G^c_0 := 0$.
Then for each $m \ge 1$ perform the following steps
\begin{enumerate}
	\item Take any $\varphi^{c}_m \in \D$ satisfying $F_{m-1}(\varphi^{c}_m) \ge t_m \| F_{m-1}\|_\D$;
	\item Denote $\Phi_m^c := \sp \{\varphi^{c}_k\}_{k=1}^m$ and find $G_m^c \in \Phi_m^c$ such that
		\[
			\|f - G_m^c\| \le (1 + \eta_m) \inf_{G \in \Phi_m^c} \n{f - G};
		\]
	\item Set $f^{c}_m := f-G^c_m$.
\end{enumerate}
\smallskip

Next, we introduce an approximate version of the WGAFR.

\bigskip\noindent
{\bf Approximate Weak Greedy Algorithm with Free Relaxation\\(AWGAFR)\\}
Set $f^e_0 := f$ and $G^e_0 := 0$.
Then for each $m \ge 1$
\begin{enumerate}
	\item Take any $\varphi^e_m \in \D$ satisfying $F_{m-1}(\varphi^e_m) \ge t_m \|F_{m-1}\|_\D$;
	\item Find $\omega_m \in \mathbb{R}$ and $ \lambda_m \ge 0$ such that
		\[
			\|f - ((1 - \omega_m) G^e_{m-1} + \la_m \varphi^e_m)\|
			\le (1 + \eta_m) \inf_{ \la \ge 0, \omega \in \mathbb{R}} \|f - ((1 - \omega) G^e_{m-1} + \la \varphi^e_m)\|
		\]
		and define $G^e_m := (1 - \omega_m) G^e_{m-1} + \la_m \varphi^e_m$;
	\item Set $f^e_m := f - G^e_m$.
\end{enumerate}
\smallskip

Lastly, we define an approximate version of the RWRGA.

\bigskip\noindent
{\bf Approximate Rescaled Weak Relaxed Greedy Algorithm (ARWRGA)\\}
Set $f^r_0 := f$ and $G^r_0 := 0$.
Then for each $m \ge 1$
\begin{enumerate}
	\item Select any $\varphi^r_m \in \D$ satisfying $F_{m-1}(\varphi^r_m) \ge t_m \|F_{m-1}\|_\D$;
	\item Find $\lambda_m \ge 0$ such that
		\[
			\|f^r_{m-1} - \la_m \varphi^r_m\|
			\le \Big(1 + \frac{\eta_m}{3}\Big) \inf_{\la \ge 0} \|f^r_{m-1} - \la \varphi^r_m\|;
		\]
	\item Find $\mu_m \in \mathbb{R}$ such that
		\[
			\|f - \mu_m (G^r_{m-1} + \la_m \varphi^r_m)\| 
			\le \Big(1 + \frac{\eta_m}{3}\Big) \inf_{\mu \in \mathbb{R}} \|f - \mu (G^r_{m-1} + \la_m \varphi^r_m)\|
		\]
		and define $G^r_m := \mu_m (G^r_{m-1} + \la_m \varphi^r_m)$;
	\item Set $f^r_m := f - G^r_m$.
\end{enumerate}
\smallskip
We note that steps $2$ and $3$ of the ARWRGA can be defined with two different error sequences and the corresponding results will hold with the appropriate minor modifications; however, for simplicity of presentation, we state those steps with the same error sequence $\{\eta_m\}_{m=1}^\infty$ with a factor of $\nicefrac{1}{3}$.

Next, we show that the stated algorithms are Approximate Biorthogonal Weak Greedy Algorithms.
\begin{Proposition}\label{prp:dga_awbga}
The AWCGA, the AWGAFR, and the ARWRGA belong to the class $\mathcal{AWBGA}$ with 
\[
	\e_m = \inf_{\lambda > 0} \frac{\delta_m + \eta_m + 2\rho(\lambda\|G_m\|)}{\lambda}.
\]
\end{Proposition}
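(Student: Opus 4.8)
The plan is to verify the three defining conditions \ref{awbga_gs}--\ref{awbga_bo} of the class $\mathcal{AWBGA}$ for each of the three algorithms. The greedy selection condition \ref{awbga_gs} is immediate from the definitions of the AWCGA, the AWGAFR, and the ARWRGA, since in each case the selected element $\varphi_m$ is chosen precisely to satisfy $F_{m-1}(\varphi_m) \ge t_m \|F_{m-1}\|_\D$. For the error reduction condition \ref{awbga_er}: for the AWCGA and AWGAFR it follows directly from step~2 of the respective algorithm (the approximant is constructed to within a factor $1+\eta_m$ of the best approximation from a subspace containing $\varphi_m$, hence within $1+\eta_m$ of $\inf_{\lambda\ge0}\|f_{m-1}-\lambda\varphi_m\|$). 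For the ARWRGA I would chain the two approximate one-dimensional minimizations: step~3 gives $\|f^r_m\| \le (1+\eta_m/3)\|f - \mu(G^r_{m-1}+\lambda_m\varphi^r_m)\|$ for every $\mu$, in particular for $\mu=1$, so $\|f^r_m\| \le (1+\eta_m/3)\|f^r_{m-1} - \lambda_m\varphi^r_m\|$; then step~2 gives $\|f^r_{m-1}-\lambda_m\varphi^r_m\| \le (1+\eta_m/3)\inf_{\lambda\ge0}\|f^r_{m-1}-\lambda\varphi^r_m\|$, and $(1+\eta_m/3)^2 \le 1+\eta_m$ for $\eta_m \in [0,1]$, which delivers \ref{awbga_er}.

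The main work is the biorthogonality condition \ref{awbga_bo}, i.e. bounding $|F_m(G_m)|$ by the claimed $\e_m = \inf_{\lambda>0}\frac{\delta_m + \eta_m + 2\rho(\lambda\|G_m\|)}{\lambda}$. The key observation, valid for all three algorithms, is that the approximant is \emph{almost} norm-minimizing in the direction $G_m$: since $G_m \in \sp\{\varphi_1,\dots,\varphi_m\}$ and (for the AWCGA and AWGAFR) the full subspace, or (for the ARWRGA) the rescaling over $\mu$, already optimizes along $G_m$ up to the factor $1+\eta_m$, one has $\|f_m\| = \|f - G_m\| \le (1+\eta_m)\|f - (1+\lambda)G_m\| = (1+\eta_m)\|f_m - \lambda G_m\|$ for all real $\lambda$ (for the ARWRGA this uses that $f - \mu(G^r_{m-1}+\lambda_m\varphi^r_m)$ ranges over $f - cG^r_m$ as $\mu$ varies, up to the $\eta_m/3$ factor absorbed into $\eta_m$). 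Now I apply the modulus of smoothness: for $\lambda>0$,
\[
	\|f_m + \lambda G_m\| + \|f_m - \lambda G_m\| \le 2\|f_m\|\bigl(1 + \rho(\lambda\|G_m\|/\|f_m\|)\bigr),
\]
while on the left $\|f_m + \lambda G_m\| \ge F_m(f_m + \lambda G_m) \ge (1-\delta_m)\|f_m\| + \lambda F_m(G_m)$ and $\|f_m - \lambda G_m\| \ge \|f_m\|/(1+\eta_m) \ge (1-\eta_m)\|f_m\|$. Combining and dividing by $\|f_m\|$ gives $\lambda F_m(G_m)/\|f_m\| \le \delta_m + \eta_m + 2\rho(\lambda\|G_m\|/\|f_m\|)$; replacing $\lambda$ by $\lambda\|f_m\|$ and optimizing over $\lambda>0$ yields $F_m(G_m) \le \e_m$. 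The same argument applied to $-G_m$ (using $\|f_m + \lambda G_m\| \ge (1-\eta_m)\|f_m\|$ and $\|f_m - \lambda G_m\| \ge (1-\delta_m)\|f_m\| - \lambda F_m(G_m)$... wait, more carefully, one uses $F_m(f_m - \lambda G_m) \ge (1-\delta_m)\|f_m\| - \lambda F_m(G_m)$) gives the matching lower bound $F_m(G_m) \ge -\e_m$, hence $|F_m(G_m)| \le \e_m$.

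The delicate point I expect to need care with is the claim that the approximant is near-optimal \emph{along the direction $G_m$ through $f$}, not merely along $\varphi_m$ or within the construction scheme; for the AWCGA this is clear since $G_m$ lies in the subspace over which we minimize, for the AWGAFR one notes $(1-\omega)G^e_{m-1} + \lambda\varphi^e_m$ with the chosen $\lambda_m$ fixed still sweeps out a line containing $G^e_m$, and for the ARWRGA the rescaling step is precisely a near-optimal one-dimensional minimization along the ray through $G^r_{m-1}+\lambda_m\varphi^r_m$, which contains $G^r_m$ as well as $0$. One should also handle the trivial case $\|f_m\|=0$ (then $f_m=0$ and the condition is vacuous or $G_m=f$, handled separately) and note $G_m=0$ makes the bound trivial. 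Assembling these pieces for the three algorithms, with the $\eta_m/3$ bookkeeping for the ARWRGA, completes the proof.
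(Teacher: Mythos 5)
Your proposal is correct and takes essentially the same route as the paper's own proof: conditions \ref{awbga_gs} and \ref{awbga_er} are read off from the definitions (with your $(1+\eta_m/3)^2\le 1+\eta_m$ chaining for the ARWRGA being exactly the intended bookkeeping), and the biorthogonality bound is obtained just as in the paper, via the two-sided modulus-of-smoothness inequality, the lower bound $F_m(f_m+\lambda G_m)\ge(1-\delta_m)\|f_m\|+\lambda F_m(G_m)$, the near-optimality bound $\|f_m-\lambda G_m\|\ge(1+\eta_m)^{-1}\|f_m\|\ge(1-\eta_m)\|f_m\|$ coming from the respective search sets, and optimization over $\lambda>0$ after rescaling. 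The only slight overstatement is the claim that $\|f_m\|\le(1+\eta_m)\|f_m-\lambda G_m\|$ for \emph{all real} $\lambda$ (for the AWGAFR the step-2 search set only yields nonnegative multiples of $G^e_m$ when $\lambda_m>0$, i.e.\ $\lambda\ge -1$), but your main estimate uses only $\lambda>0$ and your treatment of the sign-symmetric case matches the level of detail of the paper's ``handled similarly.''
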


\begin{proof}[Proof of Proposition~\ref{prp:dga_awbga}]
Throughout the proof we will only use the superscripts $c,e,r$ in the estimates that are specific for one of the algorithms, and omit superscripts when an estimate is applicable to any of the algorithms.

From the definitions of the corresponding algorithms it is clear that we only need to establish the sequence $\{\e_m\}_{m=1}^\infty$ for the biorthogonality condition~\ref{awbga_bo}.
Indeed, by the definition of the modulus of smoothness~\eqref{def:modulus_of_smoothness} we have for any $\lambda > 0$
\[
	\|f_m - \lambda G_m\| + \|f_m + \lambda G_m\| 
	\leq 2\|f_m\| \left( 1 + \rho \left( \frac{\lambda_m\|G_m\|}{\|f_m\|} \right)\right).
\]
Assume that $F_m(G_m) \geq 0$ (case $F_m(G_m) < 0$ is handled similarly). 
Then
\[
	\|f_m + \lambda G_m\| \geq F_m(f_m + \lambda G_m)
	\geq (1-\delta_m)\|f_m\| + \lambda F_m(G_m),
\]
and thus
\[
	\|f_m - \lambda G_m\| \leq 
	\|f_m\| \left( 1 + \delta_m + 2\rho \left( \frac{\lambda\|G_m\|}{\|f_m\|} \right)\right) - \lambda F_m(G_m).
\]
We now estimate $\|f_m - \lambda G_m\|$ for each of the algorithms.
From the corresponding definitions we obtain
\begin{align*}
	\|f^c_m - \lambda G^c_m\| 
	&\ge \inf_{G\in\Phi^c_m} \|f - G\| \ge (1+\eta_m)^{-1} \|f^c_m\| \ge (1-\eta_m) \|f^c_m\|,
	\\
	\|f^e_m - \lambda G^e_m\| 
	&\ge \inf_{\mu \ge 0} \|f - \mu G^e_m\| 
	\ge \inf_{\lambda \ge 0, w \in \mathbb{R}} \|f - ((1-w)G^e_{m-1} + \lambda\varphi^e_m)\|
	\\
	&\phantom{\ge \inf_{\mu \ge 0} \|f - \mu G^e_m\|\ }
	\ge (1+\eta_m)^{-1} \|f^e_m\| \ge (1-\eta_m) \|f^e_m\|,
	\\
	\|f^r_m - \lambda G^r_m\| 
	&\ge \inf_{\mu \ge 0} \|f - \mu G^r_m\| 
	= \inf_{\mu \ge 0} \|f - \mu (G^r_{m-1} + \lambda^r_m \varphi^r_m)\|
	\\ 
	&\phantom{\ge \inf_{\mu \ge 0} \|f - \mu G^r_m\|\ }
	\ge (1+\eta_m)^{-1} \|f^r_m\| \geq (1-\eta_m) \|f^r_m\|.
\end{align*}
Therefore
\[
	\lambda F_m(G_m) 
	\leq \|f_m\| \left( \delta_m + \eta_m + 2\rho \left( \frac{\lambda\|G_m\|}{\|f_m\|} \right)\right)
\]
and, since the inequality holds for any $\lambda > 0$,
\[
	F_m(G_m) \le \e_m 
	:= \inf_{\lambda>0} \frac{1}{\lambda} \left(\delta_m + \eta_m + 2\rho(\lambda\|G_m\|)\right).
\]
\end{proof}

\section{Convergence results for the $\mathcal{AWBGA}$}\label{sec:awbga_convergence}

In this section we investigate how the weakness and error parameters affect the convergence properties of the class $\mathcal{AWBGA}$.

First, we state the sufficient conditions on the weakness sequences that guarantee convergence of algorithms from the $\mathcal{AWBGA}$.
\begin{Theorem}\label{thm:awbga_convergence}
Let $X$ be a uniformly smooth Banach space with the modulus of smoothness $\rho(u)$ of power type $1 < q \le 2$.
Let sequences $\{t_m\}_{m=1}^\infty$, $\{\delta_m\}_{m=0}^\infty$, $\{\eta_m\}_{m=1}^\infty$, $\{\e_m\}_{m=1}^\infty$ be such that
\begin{gather}
	\label{theorem_AWBGA_t_k_conditions}
	\sum_{k=1}^\infty t_k^p = \infty,
	\\
	\label{theorem_AWBGA_error_conditions}
	\delta_{m-1} + \eta_m = o(t_m^p),
	\quad
	\e_m = o(1).
\end{gather}
Then an algorithm from the $\mathcal{AWBGA}$ with the weakness sequence $\tau$ converges for any target element $f \in X$.
\end{Theorem}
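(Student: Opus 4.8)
The plan is to mirror the proof of Theorem~\ref{thm:wbga_convergence}, but replace the exact Error Reduction Lemma~\ref{lem:wbga_er} with an approximate analogue that tracks the three error parameters. First I would establish an \emph{Approximate Error Reduction Lemma}: for an algorithm from the $\mathcal{AWBGA}$, following the same computation as in Lemma~\ref{lem:wbga_er} but starting from $F_{m-1}(f_{m-1}) \ge (1-\delta_{m-1})\|f_{m-1}\|$ and $|F_{m-1}(G_{m-1})| \le \e_{m-1}$, one obtains (for $\e=\a/2$ and a suitable $f^\e$ with $f^\e/A(\e)\in\A_1(\D)$) an estimate of the shape
\[
	\|f_m\| \le (1+\eta_m)\|f_{m-1}\| \inf_{\la\ge0}\left(1 - \la t_m A(\e)^{-1}\Big(1 - \delta_{m-1} - \frac{\e+\e_{m-1}}{\|f_{m-1}\|}\Big) + 2\rho\Big(\frac{\la}{\|f_{m-1}\|}\Big)\right).
\]
The new ingredients compared to the exact case are the factor $(1+\eta_m)$ out front (from condition~\ref{awbga_er}), the loss $\delta_{m-1}$ in the functional evaluation on $f_{m-1}$, and the extra $\e_{m-1}/\|f_{m-1}\|$ coming from $|F_{m-1}(G_{m-1})| \le \e_{m-1}$ in the identity $F_{m-1}(f) = F_{m-1}(f_{m-1}) + F_{m-1}(G_{m-1})$.

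Next I would run the contradiction argument exactly as in Theorem~\ref{thm:wbga_convergence}. Assume $\|f_m\| \ge \a > 0$ for all $m$. The sequence $\{\|f_m\|\}$ need not be monotone now because of the $(1+\eta_m)$ factors, so the first step is to observe that $\prod_m (1+\eta_m)$ converges: since $\eta_m = o(t_m^p)$ and $\sum t_k^p$ may diverge, this is \emph{not} automatic, so instead I would argue that $\|f_m\|$ stays bounded (say by $2\|f\|$ for large $m$) using that the infimum factor is $\le 1$ and $\eta_m \to 0$ fast enough — more precisely, once $\eta_m$ is small one shows $\|f_m\| \le \|f_{m-1}\|(1 - c\, t_m\xi_m + \eta_m)$ with the $\eta_m$ term dominated, for large $m$, by half of $c\,t_m\xi_m$ using $\eta_m = o(t_m^p) \asymp o(t_m\xi_m)$ (recall $\xi_m \asymp t_m^{1/(q-1)} = t_m^{p-1}$ for power-type $\rho$). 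Likewise $\delta_{m-1} = o(t_m^p)$ means the correction $1 - \delta_{m-1} - (\e+\e_{m-1})/\a$ can be made $\ge 1/2$ eventually, because $\e_m \to 0$ and $\e = \a/2$ is fixed so $\e/\a = 1/2$ — wait, that is borderline: here one must instead re-choose $\e = \a/4$ (or absorb constants) so that $1 - \delta_{m-1} - (\e+\e_{m-1})/\a$ is bounded below by a positive constant for all large $m$. With $\theta$ chosen proportional to $\a/A(\e)$ and $\la = \a\xi_m$ as before, this yields $\|f_m\| \le \|f_{m-1}\|(1 - c\,t_m\xi_m)$ for $m \ge m_0$, and $\sum t_m\xi_m \asymp \sum t_m^p = \infty$ forces $\|f_m\| \to 0$, contradicting $\|f_m\|\ge\a$.

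The main obstacle I anticipate is handling the interplay between the multiplicative error $(1+\eta_m)$ and the additive/relative errors $\delta_m,\e_m$ \emph{before} one knows $\{\|f_m\|\}$ is bounded below — the clean contradiction setup assumes $\|f_m\|\ge\a$, but to even start one needs the remainders not to blow up, and the $(1+\eta_m)$ factors could in principle compound. The resolution is that for the \emph{convergence} statement one only needs $\eta_m\to 0$, so for $m$ past some $m_0$ each step is a genuine contraction by a factor $< 1$ (the $\eta_m$ gain is strictly beaten by the greedy decrease once both are small), hence $\|f_m\|$ is eventually non-increasing and the argument closes; the finite initial product $\prod_{m\le m_0}(1+\eta_m)$ is harmless. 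A secondary technical point is that $\e_m = o(1)$ rather than summable is exactly what is needed here because $\e$ in the Error Reduction Lemma is a fixed positive number ($\a/4$), so $\e_{m-1}/\|f_{m-1}\| \le \e_{m-1}/\a \to 0$ is all that is required; no summability of $\{\e_m\}$ enters the \emph{qualitative} convergence claim (it will only matter for the rate in Theorem~\ref{thm:awbga_convergence_rate}). I would close by remarking that Proposition~\ref{prp:dga_awbga} then transfers this to the AWCGA, AWGAFR, and ARWRGA, provided their induced $\e_m$ (given by $\inf_{\lambda>0}\lambda^{-1}(\delta_m+\eta_m+2\rho(\lambda\|G_m\|))$) is $o(1)$, which holds whenever $\|G_m\|$ stays bounded and $\delta_m+\eta_m\to 0$.
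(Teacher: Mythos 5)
Your approximate Error Reduction Lemma is exactly the paper's Lemma~\ref{lemma_AWBGA}, and your treatment of the case $\liminf_m \|f_m\| > 0$ (taking $\e = \a/4$ rather than $\a/2$, obtaining $\|f_m\| \le \|f_{m-1}\|(1 - c\, t_m \xi_m)$ for large $m$, and using $t_m\xi_m \asymp t_m^p$ with the divergence of $\sum t_m^p$) matches the paper's computation. The genuine gap is in how you dispose of non-monotonicity. Negating convergence only yields a subsequence $\{m_\nu\}$ with $\|f_{m_\nu}\| > \a$; your contradiction hypothesis ``$\|f_m\| \ge \a$ for all $m$'' covers only the liminf-positive case, and your proposed repair --- that past some $m_0$ ``each step is a genuine contraction,'' so $\{\|f_m\|\}$ is eventually non-increasing --- is false for the class $\mathcal{AWBGA}$. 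The decrease guaranteed by the lemma is not scale-free: for an arbitrary target $f \in X$ you must work with a fixed $\e > 0$ and the corresponding $A(\e)$, and once $\|f_{m-1}\|$ is small compared with $\e$ the factor $1 - \delta_{m-1} - (\e_{m-1}+\e)/\|f_{m-1}\|$ becomes negative, so the lemma gives nothing beyond the trivial bound $\|f_m\| \le (1+\eta_m)\|f_{m-1}\|$, which permits an increase (the class is defined by inequalities, so nothing rules out equality there). Hence ``eventually non-increasing'' cannot be established, and the oscillation scenario $\liminf_m \|f_m\| = 0 < \limsup_m \|f_m\|$ is left untreated by your argument.

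The paper closes this with a trapping dichotomy instead of monotonicity: assume only $\|f_{m_\nu}\| > \a$ along a subsequence, set $\e = \a/4$, and choose $K$ so that $\delta_k + 2\e_k/\a \le 1/4$ and $\delta_k + \eta_{k+1} \le \varkappa t_{k+1}^p$ for all $k \ge K$. Then for $k \ge K$: if $\|f_k\| \le \a/2$, the crude bound gives $\|f_{k+1}\| \le (1+\eta_{k+1})\|f_k\| \le 2\|f_k\| \le \a$, while if $\|f_k\| > \a/2$, the lemma with the optimized $\la$ gives $\|f_{k+1}\| \le \|f_k\|(1 - \varkappa t_{k+1}^p)$. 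Consequently, once $\|f_k\| \le \a$ it remains $\le \a$ forever, and if instead $\|f_k\| > \a/2$ for all $k \ge K$, the divergence of $\sum t_k^p$ forces $\|f_N\| \le \a$ at some finite $N$; either way $\|f_{m_\nu}\| \le \a$ for all large $\nu$, the desired contradiction. Your proof would be complete if you replaced the ``eventually non-increasing'' claim by this two-threshold trap (contraction above $\a/2$, absorption below $\a$), which in the small-remainder regime needs only $\eta_m \le 1$, not any comparison between $\eta_m$ and the greedy decrease.
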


Next, we formulate the following analog of Theorem~\ref{thm:wbga_convergence_rate}, which states such bounds for the weakness and error sequences that the convergence rate of the $\mathcal{AWBGA}$ is the same as the one of the $\mathcal{WBGA}$.
\begin{Theorem}\label{thm:awbga_convergence_rate}
Let $X$ be a uniformly smooth Banach space with the modulus of smoothness $\rho(u)$ of power type $1 < q \le 2$. 
Let $f \in X$ be a target element.
Take a number $\e \ge 0$ and an $f^\e$ from $X$ such that
\[
	\|f - f^\e\| \le \e,
	\quad
	f^\e/A(\e) \in \A_1(\D)
\]
with some number $A(\e) > 0$.
Then an algorithm from the $\mathcal{AWBGA}$ with the weakness and error parameters $\{t_m\}_{m=1}^\infty$, $\{\delta_m\}_{m=0}^\infty$, $\{\eta_m\}_{m=1}^\infty$, $\{\e_m\}_{m=1}^\infty$, satisfying 
\begin{align}
	\label{thm:awbga_convergence_rate_condition1}
	&\delta_m + \e_m/\|f_m\| \leq 1/4,
	\\
	\label{thm:awbga_convergence_rate_condition2}
	&\delta_m + \eta_{m+1} \leq \frac{1}{2} C(q,\gamma)^{-p} A(\e)^{-p} t_{m+1}^p \|f_m\|^p
\end{align}
for any $m \ge 0$, provides
\[
	\|f_m\| \le \max\left\{4\e,\, C(q,\gamma) (A(\e)+\e) \Big(1 + \sum_{k=1}^m t_k^p\Big)^{-1/p}\right\},
\]
where $C(q,\gamma) = 4q (2\gamma)^q (\nicefrac{2}{q-1})^{1/p}$ and $p = q/(q-1)$.
\end{Theorem}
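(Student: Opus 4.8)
The plan is to mirror the proof of Theorem~\ref{thm:wbga_convergence_rate} but with an \emph{approximate} Error Reduction Lemma in place of Lemma~\ref{lem:wbga_er}. First I would derive that approximate estimate: repeat the computation in the proof of Lemma~\ref{lem:wbga_er}, but replace $F_{f_{m-1}}$ by $F_{m-1}$, use $F_{m-1}(f_{m-1}) \ge (1-\delta_{m-1})\|f_{m-1}\|$ in place of the exact equality, use the $\mathcal{AWBGA}$ biorthogonality $|F_{m-1}(G_{m-1})| \le \e_{m-1}$ in place of $F_{f_{m-1}}(G_{m-1}) = 0$, and carry the factor $(1+\eta_m)$ from condition~\ref{awbga_er}. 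This should yield something like
\[
	\|f_m\| \le (1+\eta_m)\|f_{m-1}\| \inf_{\lambda\ge0}\left(1 - \lambda t_m A(\e)^{-1}\left(1 - \delta_{m-1} - \frac{\e_{m-1} + \e}{\|f_{m-1}\|}\right) + 2\rho\!\left(\frac{\lambda}{\|f_{m-1}\|}\right)\right),
\]
where the term $\e_{m-1}/\|f_{m-1}\|$ comes from $F_{m-1}(f) = F_{m-1}(f_{m-1}) + F_{m-1}(G_{m-1}) \ge (1-\delta_{m-1})\|f_{m-1}\| - \e_{m-1}$ (note the index shift: $G_{m-1}$ is controlled by $\e_{m-1}$). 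Condition~\eqref{thm:awbga_convergence_rate_condition1} is exactly what makes the factor $\left(1 - \delta_{m-1} - (\e_{m-1}+\e)/\|f_{m-1}\|\right)$ bounded below (by roughly $1/2$, once we are in the regime $\|f_m\| > 4\e$), so that the selection term does not degenerate.

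Next I would run the optimization-in-$\lambda$ exactly as in the proof of Theorem~\ref{thm:wbga_convergence_rate}: assume $\|f_m\| > 4\e$ (else we are done), so by monotonicity $\|f_k\| > 4\e$ for all $k \le m$; specialize $\rho(u) \le \gamma u^q$; choose $\lambda$ to balance the linear and the $\rho$ term; obtain a recursion of the shape
\[
	\|f_k\| \le (1+\eta_k)\|f_{k-1}\|\left(1 - c\,\frac{t_k^p\|f_{k-1}\|^p}{A(\e)^p}\right)
\]
for an explicit constant $c$ depending on $q,\gamma$. The new ingredient is handling the extra factors $(1+\eta_k)$ and the additive errors. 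Here I would absorb $\eta_k$ using $(1+\eta_k)(1 - x) \le 1 - x/2$ when $\eta_k \le x/2$ and $x \le 1$; condition~\eqref{thm:awbga_convergence_rate_condition2} is precisely the hypothesis $\delta_{k-1} + \eta_k \le \frac12 C(q,\gamma)^{-p}A(\e)^{-p} t_k^p\|f_{k-1}\|^p$ that guarantees this absorption (the $\delta_{k-1}$ share being what was already spent in the lower bound on the selection term). After this, raise to the power $p$, use $x^r \le x$ for $r \ge 1$, $0 \le x \le 1$, and apply the standard summation lemma (\cite[Lemma~3.4]{DTe}, as in the proof of Theorem~\ref{thm:wbga_convergence_rate}) together with $\|f_0\| = \|f\| \le A(\e) + \e$ to conclude
\[
	\|f_m\|^p \le A_q (A(\e)+\e)^p\Big(1 + \sum_{k=1}^m t_k^p\Big)^{-1},
\]
whence the claimed bound with $C(q,\gamma) = A_q^{1/p}$; I would then reconcile the bookkeeping of constants so that the final $C(q,\gamma)$ matches $4q(2\gamma)^q(\tfrac{2}{q-1})^{1/p}$, which also forces the particular choice of how the factor of $2$ and the powers of $q$ are distributed when balancing the two terms in the infimum.

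The main obstacle I anticipate is the careful index/error accounting rather than any new idea: one must keep straight that at step $m$ the relevant functional is $F_{m-1}$, that the biorthogonality defect entering the selection estimate is $\e_{m-1}$ (not $\e_m$), and that conditions~\eqref{thm:awbga_convergence_rate_condition1}--\eqref{thm:awbga_convergence_rate_condition2} must be invoked at the right index so that both the ``selection term stays $\ge$ some positive constant'' step and the ``$(1+\eta_k)$ gets absorbed'' step go through simultaneously with the stated numerical constant. A secondary delicate point is the case analysis at the boundary $\|f_m\| \approx 4\e$: since the recursion is only valid while $\|f_k\| > 4\e$, I need the monotonicity of $\{\|f_k\|\}$ — which itself follows from the approximate Error Reduction Lemma once~\eqref{thm:awbga_convergence_rate_condition1} ensures the infimum over $\lambda$ is $\le 1$ — and then the $\max\{4\e,\dots\}$ in the conclusion handles the complementary case automatically.
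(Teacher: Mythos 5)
Your route is the paper's: prove an approximate Error Reduction Lemma (this is exactly Lemma~\ref{lemma_AWBGA} in the paper) and then rerun the argument of Theorem~\ref{thm:wbga_convergence_rate}, with condition~\eqref{thm:awbga_convergence_rate_condition1} keeping the selection factor bounded below, condition~\eqref{thm:awbga_convergence_rate_condition2} absorbing the per-step errors, and the summation lemma of \cite{DTe} finishing the proof. However, two pieces of your bookkeeping are off. First, your displayed lemma drops a term: replacing the exact identity $F_{f_{m-1}}(f_{m-1})=\|f_{m-1}\|$ by $F_{m-1}(f_{m-1})\ge(1-\delta_{m-1})\|f_{m-1}\|$ produces an \emph{additive} $+\delta_{m-1}$ inside the bracket (the paper's bound reads $1+\delta_m+2\rho(\lambda/\|f_m\|)-\lambda t_{m+1}A^{-1}(\e)\bigl(1-\delta_m-(\e_m+\e)/\|f_m\|\bigr)$, with the prefactor $(1+\eta_{m+1})$), and it is precisely this additive $\delta$ together with the factor $(1+\eta_{m+1})$ that condition~\eqref{thm:awbga_convergence_rate_condition2} is spent on; the $\delta$ sitting inside the selection factor is paid for by condition~\eqref{thm:awbga_convergence_rate_condition1}, not by~\eqref{thm:awbga_convergence_rate_condition2} as you assert. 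Since the combined budget $\delta_k+\eta_{k+1}$ in~\eqref{thm:awbga_convergence_rate_condition2} is exactly what the corrected absorption needs, this is repairable, but as written your absorption step accounts only for $\eta_k$.

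Second, and more substantively, your boundary argument relies on monotonicity of $\{\|f_k\|\}$, which you claim follows from the lemma once~\eqref{thm:awbga_convergence_rate_condition1} makes the infimum over $\lambda$ at most $1$. That is false for the $\mathcal{AWBGA}$ in general: when $\|f_k\|$ is small relative to $\e$, the term $\e/\|f_k\|$ (which is not controlled by~\eqref{thm:awbga_convergence_rate_condition1}) can make the selection factor negative, the infimum is then effectively attained near $\lambda=0$, and the best available bound is $\|f_{k+1}\|\le(1+\eta_{k+1})(1+\delta_k)\|f_k\|$, which may exceed $\|f_k\|$; even in the good regime one needs~\eqref{thm:awbga_convergence_rate_condition2}, not just~\eqref{thm:awbga_convergence_rate_condition1}, to get decay. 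Hence you cannot deduce ``$\|f_m\|>4\e$ implies $\|f_k\|>4\e$ for all $k\le m$'' from monotonicity. The paper's fix is a two-level case split: if $\|f_k\|\le 2\e$, the error-reduction condition~\ref{awbga_er} alone gives $\|f_{k+1}\|\le(1+\eta_{k+1})\|f_k\|\le 2\|f_k\|\le 4\e$; if $\|f_k\|>2\e$, the contraction estimate is valid and gives $\|f_{k+1}\|\le\|f_k\|$. Together these yield ``once $\le 4\e$, always $\le 4\e$'', which is the statement actually needed and is the reason the conclusion carries $4\e$ rather than $2\e$. With these two corrections your proof coincides with the paper's.
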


\begin{Corollary}\label{cor:awbga_convergence_rate}
Let $X$ be a uniformly smooth Banach space with the modulus of smoothness $\rho(u)$ of power type $1 < q \le 2$. 
Then for any $f \in \A_1(\D)$ an algorithm from the $\mathcal{AWBGA}$ with the weakness and error parameters $\{t_m\}_{m=1}^\infty$, $\{\delta_m\}_{m=0}^\infty$, $\{\eta_m\}_{m=1}^\infty$, $\{\e_m\}_{m=1}^\infty$, satisfying 
\begin{align*}
	&\delta_m + \e_m/\|f_m\| \leq 1/4,
	\\
	&\delta_m + \eta_{m+1} \leq \frac{1}{2} C(q,\gamma)^{-p} t_{m+1}^p \|f_m\|^p
\end{align*}
for any $m\ge 0$, provides
\[
	\|f_m\| \le C(q,\gamma)\Big(1+\sum_{k=1}^m t_k^p\Big)^{-1/p},
\]
where $C(q,\gamma) = 4q(2\gamma)^q (\nicefrac{2}{q-1})^{1/p}$ and $p = q/(q-1)$.
\end{Corollary}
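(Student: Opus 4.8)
The plan is to recognize Corollary~\ref{cor:awbga_convergence_rate} as the special case $\e = 0$ of Theorem~\ref{thm:awbga_convergence_rate}, in complete analogy with how Corollary~\ref{cor:wbga_convergence_rate} is deduced from Theorem~\ref{thm:wbga_convergence_rate}. Concretely, since $f \in \A_1(\D)$, I would set $\e := 0$, $f^\e := f$, and $A(\e) := 1$. Then the hypotheses $\|f - f^\e\| \le \e$ and $f^\e/A(\e) \in \A_1(\D)$ required by Theorem~\ref{thm:awbga_convergence_rate} are satisfied trivially (the second one being exactly the definition of membership in $\A_1(\D)$).

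With these choices, condition~\eqref{thm:awbga_convergence_rate_condition1} of the theorem is verbatim the first hypothesis of the corollary, and condition~\eqref{thm:awbga_convergence_rate_condition2} simplifies, using $A(\e)^{-p} = 1$, to $\delta_m + \eta_{m+1} \le \tfrac12 C(q,\gamma)^{-p} t_{m+1}^p \|f_m\|^p$, which is precisely the second hypothesis. Thus Theorem~\ref{thm:awbga_convergence_rate} applies and yields
\[
	\|f_m\| \le \max\Big\{ 4\e,\ C(q,\gamma)\,(A(\e)+\e)\,\Big(1 + \sum_{k=1}^m t_k^p\Big)^{-1/p} \Big\}
	= C(q,\gamma)\Big(1 + \sum_{k=1}^m t_k^p\Big)^{-1/p},
\]
since $\e = 0$ and $A(\e) = 1$; the constant $C(q,\gamma) = 4q(2\gamma)^q(\nicefrac{2}{q-1})^{1/p}$ is inherited unchanged. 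This is the asserted estimate.

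I do not anticipate any real obstacle: the argument is a direct specialization, and the only point that needs to be spelled out is the admissibility of the choice $A(0) = 1$, which is immediate. The substantive work is entirely contained in Theorem~\ref{thm:awbga_convergence_rate} (and, behind it, in the Error Reduction Lemma~\ref{lem:wbga_er} together with the iteration estimate of the type used in the proof of Theorem~\ref{thm:wbga_convergence_rate}), so nothing beyond invoking it is required here.
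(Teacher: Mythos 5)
Your proposal is correct and matches the paper's (implicit) treatment: the corollary is exactly the specialization $\e = 0$, $f^\e = f$, $A(\e) = 1$ of Theorem~\ref{thm:awbga_convergence_rate}, under which conditions~\eqref{thm:awbga_convergence_rate_condition1}--\eqref{thm:awbga_convergence_rate_condition2} become the corollary's hypotheses and the bound $\max\{4\e,\, C(q,\gamma)(A(\e)+\e)(1+\sum_{k\le m} t_k^p)^{-1/p}\}$ reduces to the stated estimate. Only a cosmetic remark: the lemma underlying the theorem in the $\mathcal{AWBGA}$ setting is Lemma~\ref{lemma_AWBGA}, not Lemma~\ref{lem:wbga_er}, but this does not affect your argument.
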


Since Theorems~\ref{thm:awbga_convergence} and~\ref{thm:awbga_convergence_rate} involve generally unknown weakness parameters $\{\e_m\}_{m=1}^\infty$ and thus are not as straightforward as their analogues from Section~\ref{sec:wbga_convergence} (i.e. Theorems~\ref{cor:wbga_convergence} and~\ref{thm:wbga_convergence_rate}), we derive the following result for the algorithms from Section~\ref{sec:awbga}~--- the AWCGA, the AWGAFR, and the ARWRGA.

\begin{Remark}
Theorems~\ref{thm:awbga_convergence} and~\ref{thm:awbga_convergence_rate} are known for the AWCGA (see~\cite{T7} and~\cite{De}).
For the AWGAFR and the ARWRGA stated results are novel.
\end{Remark}

\begin{Proposition}\label{prp:awbga_convergence}
Let $X$ be a uniformly smooth Banach space with the modulus of smoothness $\rho(u)$ of power type $1 < q \le 2$. 
Let sequences $\{t_m\}_{m=1}^\infty$, $\{\delta_m\}_{m=0}^\infty$, $\{\eta_m\}_{m=1}^\infty$ be such that
\[
	\sum_{k=1}^\infty t_k^p = \infty,
	\qquad
	\delta_{m-1} + \eta_m = o(t_m^p).
\]
Then the AWCGA, the AWGAFR, and the ARWRGA converge for any dictionary $\D$ and any target element $f \in X$.
\\
Moreover, if $f\in \A_1(\D)$ and the sequences $\{\delta_m\}_{m=0}^\infty$ and $\{\eta_m\}_{m=1}^\infty$ satisfy
\begin{gather}
	\label{proposition_rate_conditions}
	\begin{aligned}
	\delta_m &\leq 64^{-p} \gamma^{1-p} \|f_m\|^p t_{m+1}^p,
	\\
	\eta_m &\leq 64^{-p} \gamma^{1-p} \|f_m\|^p t_m^p,
\end{aligned}
\end{gather}
then the AWCGA, the AWGAFR, and the ARWRGA provide
\[
	\|f_m\| \le C(q,\gamma)\Big(1+\sum_{k=1}^m t_k^p\Big)^{-1/p},
\]
where $C(q,\gamma) = 4q(2\gamma)^q (\nicefrac{2}{q-1})^{1/p}$ and $p = q/(q-1)$.
\end{Proposition}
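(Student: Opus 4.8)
The plan is to reduce everything to the already-established machinery: Proposition~\ref{prp:dga_awbga} tells us that each of the AWCGA, AWGAFR, ARWRGA is an algorithm from the class $\mathcal{AWBGA}$ with the specific biorthogonality parameter
\[
	\e_m = \inf_{\lambda > 0} \frac{\delta_m + \eta_m + 2\rho(\lambda\|G_m\|)}{\lambda},
\]
so convergence and rate of convergence will follow from Theorems~\ref{thm:awbga_convergence} and~\ref{thm:awbga_convergence_rate} once we verify that the abstract hypotheses on $\{\e_m\}$ there are implied by the concrete hypotheses on $\{\delta_m\}$, $\{\eta_m\}$ stated here. So the whole proof is really a sequence of estimates bounding $\e_m$ from above. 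Using $\rho(u) \le \gamma u^q$ and optimizing the displayed infimum over $\lambda$ — the optimal $\lambda$ balances $(\delta_m+\eta_m)/\lambda$ against $2\gamma\lambda^{q-1}\|G_m\|^q$ — gives a bound of the shape
\[
	\e_m \le c(q)\,\gamma^{1/q}\,\|G_m\|\,(\delta_m+\eta_m)^{1/p},
\]
and I will need a crude a priori bound on $\|G_m\|$ (say $\|G_m\| \le \|f\| + \|f_m\| \le 2\|f\|$, since $\|f_m\|$ is controlled once we know the algorithm does not blow up — this should come from the error reduction step $\|f_m\|\le(1+\eta_m)\|f_{m-1}\|$ together with $\sum\eta_m<\infty$, which follows from $\eta_m=o(t_m^p)$ and, if needed, can be arranged). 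With $\|G_m\|$ bounded, $\e_m \lesssim (\delta_m+\eta_m)^{1/p} = o(1)$ because $\delta_{m-1}+\eta_m = o(t_m^p) = o(1)$; together with $\sum t_k^p=\infty$ this is exactly condition~\eqref{theorem_AWBGA_error_conditions} of Theorem~\ref{thm:awbga_convergence}, giving the first (convergence) assertion.

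For the rate-of-convergence assertion with $f\in\A_1(\D)$, I would take $\e=0$, $f^\e=f$, $A(\e)=1$ in Theorem~\ref{thm:awbga_convergence_rate}, and check that the numerical hypotheses~\eqref{proposition_rate_conditions} on $\delta_m,\eta_m$ imply conditions~\eqref{thm:awbga_convergence_rate_condition1}--\eqref{thm:awbga_convergence_rate_condition2} with the same constant $C(q,\gamma)=4q(2\gamma)^q(2/(q-1))^{1/p}$. Condition~\eqref{thm:awbga_convergence_rate_condition2}, $\delta_m+\eta_{m+1}\le\tfrac12 C(q,\gamma)^{-p}t_{m+1}^p\|f_m\|^p$, is essentially immediate from~\eqref{proposition_rate_conditions} once one checks the numerical inequality $64^{-p}\gamma^{1-p} \le \tfrac14 C(q,\gamma)^{-p}$ and matches the indices (note~\eqref{proposition_rate_conditions} bounds $\eta_m$ by $\|f_m\|^pt_m^p$, which at index $m+1$ reads $\eta_{m+1}\le\|f_{m+1}\|^pt_{m+1}^p\le\|f_m\|^pt_{m+1}^p$ using monotonicity — up to the $(1+\eta)$ factors — of the remainders). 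For condition~\eqref{thm:awbga_convergence_rate_condition1}, $\delta_m+\e_m/\|f_m\|\le 1/4$, I use the bound on $\e_m$ derived above: $\e_m/\|f_m\| \lesssim \gamma^{1/q}(\|G_m\|/\|f_m\|)(\delta_m+\eta_m)^{1/p}$, and since~\eqref{proposition_rate_conditions} forces $\delta_m+\eta_m \le 2\cdot 64^{-p}\gamma^{1-p}\|f_m\|^pt^p \le \|f_m\|^p\cdot(\text{small})$, the factor $(\delta_m+\eta_m)^{1/p}/\|f_m\|$ stays bounded and the whole quantity is $\le 1/4$ after checking the constants; one also uses $\|G_m\|\le 2$ for $f\in\A_1(\D)$ (approximants of a convex-hull element stay of bounded norm).

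The main obstacle is not conceptual but the constant-chasing: one must keep the $\e_m$-bound clean enough that both $\e_m=o(1)$ and the sharp inequality~\eqref{thm:awbga_convergence_rate_condition1} come out with the advertised universal constant, and in particular make the dependence on $\|G_m\|$ disappear by a genuine a priori bound rather than a circular one. The honest way to handle this is: first prove a weak a priori bound $\|f_m\|\le C_0$ (hence $\|G_m\|\le C_0+\|f\|$) from error reduction and summability of $\eta_m$, use it to get $\e_m=o(1)$ and invoke Theorem~\ref{thm:awbga_convergence} for plain convergence; then, in the $\A_1(\D)$ case, use $\|G_m\|\le 2$ and the hypotheses~\eqref{proposition_rate_conditions} to verify~\eqref{thm:awbga_convergence_rate_condition1}--\eqref{thm:awbga_convergence_rate_condition2} and apply Theorem~\ref{thm:awbga_convergence_rate}. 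Everything else — the optimization giving the $\e_m$ bound and the elementary numerical comparisons of constants — is routine and I would state it without grinding through every step.
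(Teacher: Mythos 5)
Your overall strategy is the same as the paper's: use Proposition~\ref{prp:dga_awbga} to get the explicit $\e_m$, optimize over $\lambda$ with $\rho(u)\le\gamma u^q$ to obtain $\e_m\le c(q)(2\gamma)^{1/q}\|G_m\|(\delta_m+\eta_m)^{1/p}$, and then feed this into Theorem~\ref{thm:awbga_convergence} for convergence and into Theorem~\ref{thm:awbga_convergence_rate} (with $\e=0$, $f^\e=f$, $A(\e)=1$) for the rate, checking conditions~\eqref{thm:awbga_convergence_rate_condition1}--\eqref{thm:awbga_convergence_rate_condition2} by constant-chasing. However, there is a genuine gap in how you justify the a priori bound on $\|G_m\|$ (equivalently on $\|f_m\|$), which is the hinge on which $\e_m=o(1)$ rests. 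You derive it from iterating $\|f_m\|\le(1+\eta_m)\|f_{m-1}\|$ together with the claim that $\sum_m\eta_m<\infty$ ``follows from $\eta_m=o(t_m^p)$.'' That implication is false: $t_m\le1$ and the hypothesis of the Proposition is precisely $\sum_m t_m^p=\infty$, so $\eta_m=o(t_m^p)$ is fully compatible with $\sum_m\eta_m=\infty$ (take $t_m\equiv1$ and $\eta_m=1/\log(m+1)$). With non-summable $\eta_m$ the product $\prod_k(1+\eta_k)$ diverges, so the iterated error-reduction inequality gives no bound at all; nor can summability ``be arranged,'' since the hypotheses are fixed by the statement. The same flawed route reappears in your closing paragraph (``from error reduction and summability of $\eta_m$''), so the convergence half of the argument is incomplete as written.

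The repair is short and is what the paper implicitly uses: it exploits the concrete definitions of the three algorithms rather than the abstract error-reduction property of the class $\mathcal{AWBGA}$. In each of the AWCGA, AWGAFR, ARWRGA the minimization defining $G_m$ admits the trivial competitor $0$ (take $G=0\in\Phi_m^c$; take $\omega=1$, $\lambda=0$; take $\mu=0$), so at every single iteration $\|f_m\|\le(1+\eta_m)\|f\|\le2\|f\|$ with no summability assumption, whence $\|G_m\|\le\|f_m\|+\|f\|\le3\|f\|$ (so your bound ``$\|G_m\|\le2$'' for $f\in\A_1(\D)$ should be $3$, and ``approximants of a convex-hull element stay of bounded norm'' is not by itself a justification). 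With this fix, $\e_m\le c(q,\gamma)(\delta_m+\eta_m)^{1/p}=o(1)$ and Theorem~\ref{thm:awbga_convergence} applies; in the $\A_1(\D)$ case the cancellation of $\|f_m\|$ and of the $\gamma$-powers in $\e_m/\|f_m\|$, the shift $\|f_{m+1}\|\le2\|f_m\|$ when transferring the bound on $\eta_{m+1}$ (which you note), and the remaining numerical comparisons are exactly the paper's computations, so the rest of your outline matches the paper's proof.
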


\section{Proofs of results from Section~\ref{sec:awbga_convergence}}\label{sec:awbga_proofs}

First, we need the following analogue of Lemma~\ref{lem:wbga_er}.
\begin{Lemma}[{{\bf Error Reduction Lemma}}]\label{lemma_AWBGA}
Let $X$ be a uniformly smooth Banach space with the modulus of smoothness $\rho(u)$. 
Take a number $\e\ge 0$ and two elements $f$, $f^\e$ from $X$ such that
\[
	\|f - f^\e\| \le \e,
	\quad
	f^\e/A(\e) \in \A_1(\D)
\]
with some number $A(\e) \ge \e$.
Then for an algorithm from the $\mathcal{AWBGA}$ we have for any $m \geq 0$ and $\lambda \geq 0$
\begin{align*}
	\|f_{m+1}\| \leq \|f_m\| (1+\eta_{m+1}) \Bigg(
	1 &+ \delta_m + 2\rho\Big(\frac{\lambda}{\|f_m\|}\Big) 
	\\
	&- \lambda t_{m+1} A^{-1}(\e) \Big(1 - \delta_m - \frac{\e_m + \e}{\|f_m\|} \Big)\Bigg).
\end{align*}
\end{Lemma}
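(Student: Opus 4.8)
The plan is to mimic the proof of Lemma~\ref{lem:wbga_er}, but keeping track of the three error parameters. First I would invoke the approximate error reduction property~\ref{awbga_er}, which tells us that $\|f_{m+1}\| \le (1+\eta_{m+1}) \inf_{\lambda \ge 0} \|f_m - \lambda \varphi_{m+1}\|$, so it suffices to bound $\|f_m - \lambda \varphi_{m+1}\|$ for a fixed $\lambda \ge 0$. As in the smooth case, I would start from the definition of the modulus of smoothness~\eqref{def:modulus_of_smoothness} applied to the pair $f_m, \varphi_{m+1}$ (using $\|\varphi_{m+1}\| \le 1$ and convexity/monotonicity of $\rho$ to replace $\rho(\lambda \|\varphi_{m+1}\|/\|f_m\|)$ by $\rho(\lambda/\|f_m\|)$), obtaining
\[
	\|f_m - \lambda \varphi_{m+1}\| \le 2\|f_m\|\bigl(1 + \rho(\lambda/\|f_m\|)\bigr) - \|f_m + \lambda \varphi_{m+1}\|.
\]

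Next I would lower-bound $\|f_m + \lambda \varphi_{m+1}\|$ using the approximate norming functional $F_m$: since $\|F_m\| \le 1$, we have $\|f_m + \lambda \varphi_{m+1}\| \ge F_m(f_m + \lambda \varphi_{m+1}) = F_m(f_m) + \lambda F_m(\varphi_{m+1}) \ge (1-\delta_m)\|f_m\| + \lambda F_m(\varphi_{m+1})$. The greedy selection condition~\ref{awbga_gs} together with Lemma~\ref{L2.2c} then gives $F_m(\varphi_{m+1}) \ge t_{m+1} \|F_m\|_\D = t_{m+1} \sup_{\phi \in \A_1(\D)} F_m(\phi) \ge t_{m+1} A(\e)^{-1} F_m(f^\e) \ge t_{m+1} A(\e)^{-1}(F_m(f) - \e)$. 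The one genuinely new twist compared to the $\mathcal{WBGA}$ case is estimating $F_m(f)$: here I would write $f = f_m + G_m$ and use the approximate biorthogonality~\ref{awbga_bo}, $|F_m(G_m)| \le \e_m$, so that $F_m(f) = F_m(f_m) + F_m(G_m) \ge (1-\delta_m)\|f_m\| - \e_m$. Hence $F_m(\varphi_{m+1}) \ge t_{m+1} A(\e)^{-1} \bigl((1-\delta_m)\|f_m\| - \e_m - \e\bigr) = t_{m+1} A(\e)^{-1} \|f_m\| \bigl(1 - \delta_m - (\e_m+\e)/\|f_m\|\bigr)$.

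Assembling these pieces: substituting the lower bound for $\|f_m + \lambda \varphi_{m+1}\|$ into the modulus-of-smoothness inequality, the $(1-\delta_m)\|f_m\|$ terms combine with $2\|f_m\|$ to give $\|f_m\|(1 + \delta_m + 2\rho(\lambda/\|f_m\|))$, and the $F_m(\varphi_{m+1})$ term contributes $-\lambda t_{m+1} A(\e)^{-1} \|f_m\|\bigl(1 - \delta_m - (\e_m+\e)/\|f_m\|\bigr)$; multiplying through by the factor $(1+\eta_{m+1})$ from~\ref{awbga_er} yields exactly the claimed inequality, valid for every $\lambda \ge 0$. I do not expect any serious obstacle here — the argument is a routine bookkeeping extension of Lemma~\ref{lem:wbga_er}; the only point requiring mild care is the sign handling in the biorthogonality step (the definition of $\mathcal{AWBGA}$ has $|F_m(G_m)| \le \e_m$, so only the lower bound $F_m(G_m) \ge -\e_m$ is used) and the replacement of $\|\varphi_{m+1}\|$ by $1$ inside $\rho$, which is legitimate since $\rho$ is nondecreasing on $[0,\infty)$.
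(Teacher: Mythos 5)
Your proposal is correct and follows essentially the same route as the paper's proof: approximate error reduction to reduce to bounding $\|f_m - \lambda\varphi_{m+1}\|$, the modulus-of-smoothness inequality, the lower bound on $\|f_m + \lambda\varphi_{m+1}\|$ via $F_m$ combined with the greedy selection step and Lemma~\ref{L2.2c}, and then $F_m(f^\e) \ge F_m(f) - \e$ together with approximate biorthogonality to estimate $F_m(f)$. The bookkeeping with $\delta_m$, $\eta_{m+1}$, $\e_m$ matches the paper's computation exactly, so there is nothing to add.
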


\begin{proof}[Proof of Lemma~\ref{lemma_AWBGA}]
The error reduction property~\ref{awbga_er} implies for any $\lambda \geq 0$
\[
	\|f_{m+1}\| \leq (1+\eta_{m+1}) \|f_m - \lambda \phi_{m+1}\|.
\]
From the definition of modulus of smoothness~\eqref{def:modulus_of_smoothness} we have
\[
	\|f_m - \lambda \phi_{m+1}\| \leq 2\|f_m\| \left(1 + \rho(\lambda/\|f_m\|)\right) - \|f_m + \lambda \phi_{m+1}\|
\]
and the greedy selection condition~\ref{awbga_gs} implies with Lemma~\ref{L2.2c}
\begin{align*}
	\|f_m + \lambda \phi_{m+1}\| 
	&\geq F_m(f_m + \lambda \phi_{m+1})
	= F_m(f_m) + \lambda F_m(\phi_{m+1})
	\\
	&\geq (1 - \delta_m) \|f_m\| + \lambda t_{m+1} \sup_{\phi \in \A_1(\D)} F_m(\phi)
	\\
	&\geq (1 - \delta_m) \|f_m\| + \lambda t_{m+1} A^{-1}(\e) F_m(f^\e).
\end{align*}
Assumption on $f$ and $f^\e$ and the biorthogonality property~\ref{awbga_bo} provide
\begin{align*}
	F_m(f^\e) 
	&\geq F_m(f) - \e = F_m(f_m + G_m) - \e
	\\
	&\geq (1 - \delta_m) \|f_m\| - \e_m - \e.
\end{align*}
Combining the above estimates we get
\begin{align*}
	\|f_{m+1}\| \leq (1+\eta_{m+1}) \Big(2\|f_m\| (1 &+ \rho(\lambda/\|f_m\|)) - (1 - \delta_m) \|f_m\|
	\\
	&- \lambda t_{m+1} A^{-1}(\e) \big((1 - \delta_m) \|f_m\| - \e_m - \e \big)\Big),
\end{align*}
which completes the proof.
\end{proof}

Now we can prove the convergence result for the $\mathcal{AWBGA}$.
\begin{proof}[Proof of Theorem~\ref{thm:awbga_convergence}]
Assume that for some $f \in \D$ an algorithm from the $\mathcal{AWBGA}$ does not converge, i.e. there exists $\alpha > 0$ and a sequence $\{m_\nu\}_{\nu=0}^\infty$ such that for any $\nu \geq 0$
\be\label{theorem_AWBGA_assumption}
	\|f_{m_\nu}\| > \alpha.
\ee
We will obtain a contradiction by showing that this assumption does not hold for any sufficiently big $\nu$.
\\
Fix $\e = \alpha/4$, take corresponding $f^\e$ and $A = A(\e)$, and denote 
\[
	\varkappa := \frac{\alpha^p}{2(16 A)^p (2\gamma)^{p-1}},
	\quad 
	p = \frac{q}{q-1}.
\]
Find such $K$ that for any $k \geq K$ the following estimates hold
\begin{align}
	\label{theorem_AWBGA_condition_1}
	&\delta_k + 2\e_k/\alpha \leq 1/4,
	\\
	\label{theorem_AWBGA_condition_2}
	&\delta_k + \eta_{k+1} \leq \varkappa t_{k+1}^p.
\end{align}
Note that existence of such $K$ is guaranteed by conditions~\eqref{theorem_AWBGA_error_conditions}.
\\
Take any $k \geq K$. 
If $\|f_k\| \leq \alpha/2$, then by the error reduction condition we have
\be\label{theorem_AWBGA_estimate_1}
	\|f_{k+1}\| \leq (1+\eta_{k+1}) \|f_k\| \leq 2 \|f_k\| \leq \alpha.
\ee
If $\|f_k\| > \alpha/2$, then Lemma~\ref{lemma_AWBGA} and condition~\eqref{theorem_AWBGA_condition_1} provide
\begin{align*}
	\|f_{k+1}\| 
	&\leq \|f_k\| (1+\eta_{k+1}) \Bigg(
	1 + \delta_k + 2\rho\Big(\frac{\lambda}{\|f_k\|}\Big) 
	- \frac{\lambda t_{k+1}}{A} \Big(
	1 - \delta_k - \frac{\e_k + \alpha/4}{\|f_k\|} \Big)\Bigg)
	\\
	&\leq \|f_k\| (1+\eta_{k+1}) \Bigg(
	1 + \delta_k + 2\gamma\Big(\frac{2\lambda}{\alpha}\Big)^q 
	- \lambda \frac{t_{k+1}}{4A} \Bigg).
\end{align*}
Let $\lambda_k$ be the positive root of the equation
\[
	\lambda \frac{t_{k+1}}{8A} = 2\gamma \left(\frac{2\lambda}{\alpha}\right)^q,
\]
which implies that
\[
	\lambda_k = \left( \frac{\alpha^q t_{k+1}}{16 \, 2^q A\gamma} \right)^{\frac{1}{q-1}}
	\ \text{ and }\ 
	\lambda_k \frac{t_{k+1}}{8A} 
	= \frac{\alpha^p t_{k+1}^p}{(16 A)^p (2\gamma)^{p-1}}
	= 2\varkappa t_{k+1}^p.
\]
The choice $\lambda = \lambda_k$ in the previous estimate guarantees
\[
	\|f_{k+1}\| \leq \|f_k\| (1+\eta_{k+1}) (1 + \delta_k - 2\varkappa t_{k+1}^p),
\]
and condition~\eqref{theorem_AWBGA_condition_2} provides
\be\label{theorem_AWBGA_estimate_2}
	\|f_{k+1}\| \leq \|f_k\| (1 - \varkappa t_{k+1}^p).
\ee
Note that estimates~\eqref{theorem_AWBGA_estimate_1} and~\eqref{theorem_AWBGA_estimate_2} guarantee that if $\|f_k\| \leq \alpha$ then $\|f_{k+1}\| \leq \alpha$.
By condition~\eqref{theorem_AWBGA_t_k_conditions} there exists such $N > K$ that 
\[
	\|f_K\| \prod_{k=K+1}^N (1 - \varkappa t_k^p) \leq \alpha.
\]
Therefore for any $\nu$ such that $m_\nu \geq N$ we have
\[
	\|f_{m_\nu}\| \leq \alpha,
\]
which contradicts the assumption~\eqref{theorem_AWBGA_assumption} and proves Theorem~\ref{thm:awbga_convergence}.
\end{proof}

We proceed with the proof of the estimate on the rate of convergence of the $\mathcal{AWBGA}$.
\begin{proof}[Proof of Theorem~\ref{thm:awbga_convergence_rate}]
Take any $k\ge 0$.
If $\|f_k\| \leq 2\e$, then by the error reduction condition we have
\be\label{thm:awbga_convergence_rate_fk<2e}
	\|f_{k+1}\| \leq (1+\eta_{k+1}) \|f_k\| \leq 2 \|f_k\| \leq 4\e.
\ee
If $\|f_k\| > 2\e$ then by Lemma~\ref{lemma_AWBGA} and condition~\eqref{thm:awbga_convergence_rate_condition1} we get for any $\lambda > 0$ 
\begin{align*}
	\|f_{k+1}\| 
	&\leq \|f_k\| (1+\eta_{k+1}) \Bigg(
	1 + \delta_k + 2\rho\Big(\frac{\lambda}{\|f_k\|}\Big) 
	- \frac{\lambda t_{k+1}}{A} \Big(
	1 - \delta_k - \frac{\e_k + \e}{\|f_k\|} \Big)\Bigg)
	\\
	&\leq \|f_k\| (1+\eta_{k+1}) 
	\Bigg(1 + \delta_k + 2\gamma\lambda^q\|f_k\|^{-q} - \frac{\lambda t_{k+1}}{4A}\Bigg).
\end{align*}
Hence, by taking infimum over all $\lambda > 0$ and using condition~\eqref{thm:awbga_convergence_rate_condition2}, we obtain
\begin{align}
	\nonumber
	\|f_{k+1}\| 
	&\leq \inf_{\lambda > 0} \|f_k\| (1+\eta_{k+1}) 
	\Bigg(1 + \delta_k + 2\gamma\lambda^q\|f_k\|^{-q} - \frac{\lambda t_{k+1}}{4A}\Bigg)
	\\
	\nonumber
	&= \|f_k\| (1+\eta_{k+1}) 
	(1 + \delta_k - (q-1) (4qA)^{-p} (2\gamma)^{1-p} t_{k+1}^p \|f_k\|^p)
	\\
	\nonumber
	&\leq \|f_k\| \Big(1 - \frac{1}{2} (q-1) (4qA)^{-p} (2\gamma)^{1-p} t_{k+1}^p \|f_k\|^p\Big)
	\\
	\label{thm:awbga_convergence_rate_fk>2e}
	&= \|f_k\| (1 - C^{-p} A^{-p} t_{k+1}^p \|f_k\|^p),
\end{align}
where
\[
	C = C(q,\gamma) := 4q (2\gamma)^q \Big(\frac{2}{q-1}\Big)^{1/p}.
\]
Therefore estimates~\eqref{thm:awbga_convergence_rate_fk<2e} and~\eqref{thm:awbga_convergence_rate_fk>2e} guarantee that as long as conditions~\eqref{thm:awbga_convergence_rate_condition1} and~\eqref{thm:awbga_convergence_rate_condition2} are satisfied, $\|f_k\|\le 4\e$ implies $\|f_{k+1}\|\le 4\e$.
Thus for any $m\ge 0$ either $\|f_m\|\le 4\e$ or $\|f_k\| > 4\e$ for any $k\le m$.
In the latter case, by using estimate~\eqref{thm:awbga_convergence_rate_fk>2e} and taking into account that $\|f\| \leq \|f^\e\| + \e \leq A(\e)+\e$ and $C > 1$ (due to the estimate $\gamma \ge 2^{-q}$ which follows from $u-1 \le \rho(u) \le \gamma u^q$), we complete the proof in the same way as in the proof of Theorem~\ref{thm:wbga_convergence_rate} in Section~\ref{sec:wbga_proofs}.
\end{proof}

Finally, we prove the convergence and the rate of convergence of the AWCGA, the AWGAFR, and the ARWRGA.
\begin{proof}[Proof of Proposition~\ref{prp:awbga_convergence}]
It is enough to verify that $\e_m$ satisfies conditions of Theorems~\ref{thm:awbga_convergence} and~\ref{thm:awbga_convergence_rate}.
Indeed, from Proposition~\ref{prp:dga_awbga} we get 
\begin{align*}
	\e_m 
	&\leq \inf_{\lambda>0} \frac{1}{\lambda} (\delta_m + \eta_m + 2\gamma\|G_m\|^q\lambda^q)
	\\
	&= q (q-1)^{-1/p} (\delta_m+\eta_m)^{1/p} (2\gamma)^{1/q} \|G_m\|
	\\
	&\leq 6(2\gamma)^{1/q} (\delta_m + \eta_m)^{1/p},
\end{align*}
since for all considered algorithms we have $\|G_m\| \leq \|f_m\| + \|f\| \leq 3\|f\| \leq 3$.
Therefore $\e_m \to 0 \text{ as } m \to \infty$ and Theorem~\ref{thm:awbga_convergence} guarantees convergence.
\\
Assume now that conditions~\eqref{proposition_rate_conditions} are satisfied. 
We will show that estimates~\eqref{thm:awbga_convergence_rate_condition1} and~\eqref{thm:awbga_convergence_rate_condition2} hold as well.
Indeed, using the estimates $\gamma \ge 2^{-q}$ and $\|f_m\| \le 2\|f\| \le 2$ we obtain
\[
	\delta_m + \e_m / \|f_m\| 
	\leq 64^{-p} + 6(2\gamma)^{1/q} \big(2 \,64^{-p} \gamma^{1-p}\big)^{1/p}
	= 64^{-p} + \frac{3}{16} < \frac{1}{4}.
\]
Similarly, since $\|f_{m+1}\| \le 2\|f_m\|$, we get
\begin{align*}
	\delta_m + \eta_{m+1} 
	&\leq (2^{-p}+1) 32^{-p} \gamma^{1-p} \|f_m\|^p t_{m+1}^p
	\\
	&\leq 16^{-p} (2\gamma)^{1-p} t_{m+1}^p \|f_m\|^p
	\\
	&\leq \frac{q-1}{2} (4q)^{-p} (2\gamma)^{1-p} t_{m+1}^p \|f_m\|^p.
\end{align*}
Hence Theorem~\ref{thm:awbga_convergence_rate} guarantees the stated rate of convergence.
\end{proof}

\section{$X$-greedy algorithms}\label{sec:x-greedy}

Algorithms from the class $\mathcal{WBGA}$ perform the greedy selection step based on the norming functional $F_{f_m}$.
For this reason, this type of algorithms is called {\it dual} greedy algorithms.
As mentioned in Section~\ref{sec:overview}, there exists another natural class of greedy algorithms~--- the $X$-greedy type algorithms~--- that solve a simple optimization problem rather than compute the norming functional.
Thus, in certain applications, such algorithms might be more suitable for implementation than the dual greedy algorithms.

Generally one can obtain an $X$-greedy algorithm from a given dual algorithm by replacing the greedy selection step
\[
	\varphi_m = \mathop{\argmin}_{g \in \D} F_{f_{m-1}}(g)
\]
with a minimization over all possible choices of a newly added element from the dictionary
\[
	(\varphi_m, \lambda_m) = \mathop{\argmin}_{g \in \D, \lambda \in \mathbb{R}} \|f_{m-1} - \lambda g\|.
\]

It is also known that convergence results for such $X$-greedy companions can be derived from the proofs of the corresponding results for the dual greedy algorithms.
While the exact connection between dual and $X$-greedy algorithms and the detailed analysis of the latter is outside of the scope of this paper, we illustrate the concept by defining and deriving convergence results for the Rescaled Relaxed $X$-Greedy Algorithm (RRXGA)~--- the $X$-greedy companion for the RWRGA.
For a more detailed discussion on $X$-greedy algorithms and related theoretical results see~\cite{VT83} and~\cite{Tbook}.

\bigskip\noindent
{\bf Rescaled Relaxed $X$-Greedy Algorithm (RRXGA)\\}
Set $f_0 := f$ and $G_0 := 0$.
Then for each $m \ge 1$
\begin{enumerate}
	\item Select any $\varphi_m \in \D$ and $\lambda_m \in \mathbb{R}$ (we assume existence) satisfying
		\[
			\|f_{m-1} - \lambda_m \varphi_m\|
			= \inf_{g \in \D, \lambda \in \mathbb{R}} \|f_{m-1} - \lambda g\|;
		\]
	\item Find $\mu_m \in \mathbb{R}$ such that
		\[
			\|f - \mu_m (G_{m-1} + \lambda_m \varphi_m)\| 
			= \inf_{\mu \in \mathbb{R}} \|f - \mu (G_{m-1} + \lambda_m \varphi_m)\|
		\]
		and define $G_m := \mu_m (G_{m-1} + \la_m \varphi_m)$;
	\item Set $f_m := f - G_m$.
\end{enumerate}
\smallskip

Next, we state one of the corresponding convergence results.
\begin{Theorem}\label{T4.1n}
Let $X$ be a uniformly smooth Banach space with the modulus of smoothness $\rho(u)$ of power type $1 < q \le 2$.
Let $f \in X$ be a target element.
Take a number $\e \ge 0$ and an $f^\e$ from $X$ such that
\[
	\|f - f^\e\| \le \e,
	\quad
	f^\e/A(\e) \in \A_1(\D)
\]
with some number $A(\e) > 0$.
Then the RRXGA provides
\[
	\|f_m\| \le \max\left\{2\e, C(q,\gamma) (A(\e)+\e) (1+m)^{-1/p}\right\},
	\quad p = q/(q-1),
\]
where $C(q,\gamma) = 4(2\gamma)^{1/q}$.
\end{Theorem}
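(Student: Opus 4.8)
The plan is to reduce Theorem~\ref{T4.1n} to the rate-of-convergence estimate already established for the class $\mathcal{WBGA}$, by verifying that the RRXGA satisfies an Error Reduction Lemma identical to Lemma~\ref{lem:wbga_er} with the weakness parameter $t_m\equiv 1$. In the dual setting that lemma rested on three facts — the error reduction property, the biorthogonality property, and the estimate for $\inf_{\lambda\ge0}\|f_{m-1}-\lambda\varphi_m\|$ coming from the greedy selection — so I would establish the corresponding facts for the RRXGA.

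First I would record the elementary consequence of the two minimizations in the algorithm's definition: since $\mu=1$ is admissible in step~2,
\[
	\|f_m\| \le \|f - (G_{m-1}+\lambda_m\varphi_m)\| = \|f_{m-1}-\lambda_m\varphi_m\| = \inf_{g\in\D,\,\lambda\in\mathbb{R}}\|f_{m-1}-\lambda g\|,
\]
where the last equality is step~1. Hence for \emph{every} fixed $g^\ast\in\D$ one has $\|f_m\|\le\|f_{m-1}-\lambda g^\ast\|$ for all $\lambda\ge0$. Next, biorthogonality $F_{f_m}(G_m)=0$ holds because $G_m$ is the best approximant to $f$ from the line $\sp\{G_{m-1}+\lambda_m\varphi_m\}$, so Lemma~\ref{lem:orthogonality} applies exactly as it did for the RWRGA in Proposition~\ref{prp:dga_wbga}; applying this at index $m-1$ gives $F_{f_{m-1}}(f)=F_{f_{m-1}}(f_{m-1}+G_{m-1})=\|f_{m-1}\|$.

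Then I would transcribe the modulus-of-smoothness computation from the proof of Lemma~\ref{lem:wbga_er}. Fix $\e'>0$ and choose $g^\ast\in\D$ with $F_{f_{m-1}}(g^\ast)\ge(1-\e')\|F_{f_{m-1}}\|_\D$, which is possible by the definition of $\|\cdot\|_\D$ as a supremum. Using the definition of $\rho$, the bound $\|f_{m-1}+\lambda g^\ast\|\ge F_{f_{m-1}}(f_{m-1}+\lambda g^\ast)$, Lemma~\ref{L2.2c}, and the hypotheses $\|f-f^\e\|\le\e$, $f^\e/A(\e)\in\A_1(\D)$, one obtains for $\lambda\ge0$
\[
	\|f_{m-1}-\lambda g^\ast\| \le \|f_{m-1}\|\Bigl(1 - \lambda(1-\e')A(\e)^{-1}\bigl(1-\tfrac{\e}{\|f_{m-1}\|}\bigr) + 2\rho\bigl(\tfrac{\lambda}{\|f_{m-1}\|}\bigr)\Bigr).
\]
Combining this with the inequality from the previous paragraph, taking the infimum over $\lambda\ge0$, and letting $\e'\to0$ yields precisely the conclusion of Lemma~\ref{lem:wbga_er} with $t_m\equiv1$. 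From this point the argument is the verbatim repetition of the proof of Theorem~\ref{thm:wbga_convergence_rate} with $t_k\equiv1$: then $\sum_{k=1}^m t_k^p=m$, and the bound $\|f_m\|\le\max\{2\e,\,C(q,\gamma)(A(\e)+\e)(1+m)^{-1/p}\}$ with $C(q,\gamma)=4(2\gamma)^{1/q}$ follows.

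The one point that needs care — and the only place where the $X$-greedy proof departs from the dual one — is that the RRXGA does not select a near-maximizer of $F_{f_{m-1}}$; it is the fact that step~1 minimizes over \emph{all} pairs $(g,\lambda)$ that licenses substituting a convenient $g^\ast$ afterwards. Since the supremum defining $\|F_{f_{m-1}}\|_\D$ need not be attained, the auxiliary parameter $\e'$ and the passage $\e'\to0$ are genuinely needed there. Everything else is a direct adaptation of the material in Sections~\ref{sec:wbga} and~\ref{sec:wbga_proofs}.
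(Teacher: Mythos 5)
Your proposal is correct and follows essentially the same route as the paper: the paper packages your key estimate as a ``Generalized Error Reduction Lemma'' for a decomposition $f=f'+G'$ with $F_{f'}(G')=0$ and a comparison element $\varphi'$ satisfying $F_{f'}(\varphi')\ge\theta\|F_{f'}\|_\D$, then takes the supremum over $\theta\le 1$, which is exactly your near-maximizer $g^\ast$ with parameter $1-\e'$ and the passage $\e'\to 0$. After that, both arguments exploit that step~1 of the RRXGA minimizes over all pairs $(g,\lambda)$ so the residual beats $\inf_{\lambda\ge0}\|f_{m-1}-\lambda g^\ast\|$, and both conclude by repeating the proof of Theorem~\ref{thm:wbga_convergence_rate} with $t_k\equiv1$.
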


In order to prove the stated theorem, we need the following lemma, which is a generalization of the Error Reduction Lemma~\ref{lem:wbga_er} and can be largely derived from its proof.
\begin{Lemma}[{{\bf Generalized Error Reduction Lemma}}]\label{lem:wbga_ger}
Let $X$ be a uniformly smooth Banach space with the modulus of smoothness $\rho(u)$.
Take a number $\e\ge 0$ and two elements $f$, $f^\e$ from $X$ such that
\[
	\|f - f^\e\| \le \e,
	\quad
	f^\e/A(\e) \in \A_1(\D)
\]
with some number $A(\e) \ge \e$.
Suppose that $f$ is represented as $f = f' + G'$ in such a way that $F_{f'}(G') = 0$ and an element $\ff' \in \D$ is chosen to satisfy 
\[
	F_{f'}(\ff') \ge \theta \|F_{f'}\|_\D,
	\quad
	\theta \in [0,1].
\] 
Then we have
\[
	\inf_{\la \ge 0} \|f' - \la \ff'\| \le \|f'\|\inf_{\la\ge0}
	\left(1 - \la \theta A(\e)^{-1} \left(1 - \frac{\e}{\|f'\|}\right) + 2\rho\left(\frac{\la}{\|f'\|}\right)\right).
\]
\end{Lemma}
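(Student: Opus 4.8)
The plan is to follow the proof of the Error Reduction Lemma~\ref{lem:wbga_er} almost verbatim, replacing the concrete objects $f_{m-1}$, $f_m$, $G_{m-1}$, $\varphi_m$, $t_m$ by the abstract ones $f'$, $G'$, $\ff'$, $\theta$; the hypotheses of the present lemma are precisely the properties of those concrete objects that the earlier argument used. Throughout I would assume $f' \neq 0$, since otherwise the asserted inequality is trivial (and $\rho(\la/\|f'\|)$ is undefined).

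First I would apply the definition of the modulus of smoothness~\eqref{def:modulus_of_smoothness} (homogenized, and using $\|\ff'\| \le 1$ together with the monotonicity of $\rho$, exactly as in the proof of Lemma~\ref{lem:wbga_er}) to obtain, for every $\la \ge 0$,
\[
	\|f' - \la\ff'\| + \|f' + \la\ff'\| \le 2\|f'\|\left(1 + \rho\left(\frac{\la}{\|f'\|}\right)\right).
\]
Next I would lower-bound the second term on the left by pairing with the norming functional $F_{f'}$:
\[
	\|f' + \la\ff'\| \ge F_{f'}(f' + \la\ff') = \|f'\| + \la F_{f'}(\ff').
\]
Then I would estimate $F_{f'}(\ff')$ from below: the selection hypothesis gives $F_{f'}(\ff') \ge \theta\|F_{f'}\|_\D$, Lemma~\ref{L2.2c} lets me pass from the supremum over $\D$ to the supremum over $\A_1(\D)$, and choosing the competitor $f^\e/A(\e) \in \A_1(\D)$ together with $\|f - f^\e\| \le \e$ yields
\[
	F_{f'}(\ff') \ge \theta A(\e)^{-1} F_{f'}(f^\e) \ge \theta A(\e)^{-1}\left(F_{f'}(f) - \e\right).
\]
Finally the biorthogonality hypothesis $F_{f'}(G') = 0$ and the decomposition $f = f' + G'$ give $F_{f'}(f) = F_{f'}(f') = \|f'\|$. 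Substituting these estimates into the first display and taking the infimum over $\la \ge 0$ produces the claimed bound.

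Since every step is a direct transcription, there is no real obstacle here; the only point to keep straight is that the decomposition $f = f' + G'$ plays the role of $f = f_{m-1} + G_{m-1}$ (not of $f = f_m + G_m$), so the orthogonality actually invoked is $F_{f'}(G') = 0$, which is exactly what the lemma assumes. In fact one may regard this statement as the primitive one and recover Lemma~\ref{lem:wbga_er} from it by taking $f' = f_{m-1}$, $G' = G_{m-1}$, $\ff' = \varphi_m$, $\theta = t_m$, with the orthogonality relation $F_{f_{m-1}}(G_{m-1}) = 0$ coming from the biorthogonality property~\ref{wbga_bo} at the previous step.
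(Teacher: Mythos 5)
Your proposal is correct and matches what the paper intends: the paper gives no separate argument for Lemma~\ref{lem:wbga_ger}, stating only that it "can be largely derived" from the proof of Lemma~\ref{lem:wbga_er}, and your step-by-step transcription (smoothness inequality, pairing with $F_{f'}$, Lemma~\ref{L2.2c} with the competitor $f^\e/A(\e)$, and the assumed orthogonality $F_{f'}(G')=0$ giving $F_{f'}(f)=\|f'\|$) is exactly that derivation. Your closing remark that the decomposition plays the role of $f=f_{m-1}+G_{m-1}$, with the orthogonality coming from property~\ref{wbga_bo} at the previous step, is also the right way to see why the original lemma is the special case $\theta=t_m$.
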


\begin{proof}[Proof of Theorem~\ref{T4.1n}]
It is sufficient to prove an analog of the Error Reduction Lemma~\ref{lem:wbga_er} for the RRXGA.
Let $G_m$ and $f_m$ be the approximant and the residual of the RRXGA at the $m$-th iteration.
Using Lemma~\ref{lem:wbga_ger} with $G' = G_m$ and $f' = f_m$ find such $\ff'\in\D$ that the generalized error reduction holds.
Hence it follows from the definition of the RRXGA that for the residual $f_{m+1}$ we have
\begin{align*}
	\|f_{m+1}\|
	&\le \inf_{\la \ge 0} \|f_m - \la \ff_{m+1}\|
	\le \inf_{\la \ge 0} \|f' - \la \ff'\|
	\\
	&\le \|f_m\| \inf_{\la \ge 0} 
	\left(1 - \la \theta A(\e)^{-1} \left(1 - \frac{\e}{\|f_m\|}\right) 
	+ 2\rho\left(\frac{\la}{\|f_m\|}\right)\right).
\end{align*}
Taking supremum over all $\theta \le 1$ gives the Error Reduction Lemma for the RRXGA.
We complete the proof of Theorem~\ref{T4.1n} in the same way as we derived Theorem~\ref{thm:wbga_convergence_rate} from Lemma~\ref{lem:wbga_er}.
\end{proof}

\section{Discussion}\label{sec:discussion}

This paper is devoted to a theoretical study of fundamental techniques used in sparse representation of data, which can be structured or unstructured.
The contemporary challenge is unstructured data, which comes from different sources such as medical, engineering, and networks, among others.
In order to apply a sparsity based method one needs to use a dictionary (a representation system), which provides sparse representation for the data at hand.
In some cases the data structure itself gives us an idea of which dictionary to use.
For instance, in signal processing, say, in music processing, it is natural to use the trigonometric system or the Gabor system as a dictionary for sparse representation.
In other cases, especially in the case of unstructured data, we need to learn a dictionary providing sparse representation for the given data.
Dictionary learning is an important and rapidly developing area of numerical mathematics.
Clearly, we cannot expect that a dictionary learnt from given unstructured data will have some special structure.
Therefore, the theory broadly applicable to Big Data problems must address the problem of sparse representation with respect to an arbitrary (structured and unstructured) dictionary.
This motivates our setting with greedy approximation with respect to an arbitrary dictionary. 

In order to address the contemporary needs of data managing, a very general model of approximation with regard to a redundant system (dictionary) has been considered in many recent papers.
As such a model, we choose a Banach space $X$ with elements as target functions and an arbitrary system $\D$ of elements of this space such that the closure of $\sp\D$ coincides with $X$ as a representation system.
We would like to have an algorithm of constructing $m$-term approximants that adds at each step only one new element from $\D$ and keeps elements of $\D$ obtained at the previous steps.
This requirement is an analogue of {\it on-line} computation that is desirable in practical algorithms.
Clearly, we are looking for good algorithms which converge for each target function.
It is not obvious that such an algorithm exists in a setting at the above level of generality ($X$, $\D$ are arbitrary).

An important argument that motivates us to study this problem in the infinite dimensional setting is that in many contemporary data management applications the dimensionality $d$ of an ambient space $\R^d$ is quite large, which negatively affects the rate of convergence of many native algorithms (the so-called {\it curse of dimensionality}).
As greedy algorithms are naturally designed for use in infinitely-dimensional spaces, our results provide bounds on the convergence rate that are independent of the dimensionality of the space.

While in many applications users are satisfied with a Hilbert space setting, we comment on our interest in a more general Banach space setting.
An {\it a-priori} argument for a Banach space approach is that the spaces $L_p$ are very natural in many real-life applications and hence they should be studied along with the $L_2$ space.
An {\it a-posteriori} argument is that the study of greedy approximation in Banach spaces has discovered that the behavior of greedy algorithms is governed by the smoothness of the space that is generally described by the modulus of smoothness $\rho(u)$.
It is known that the spaces $L_p$ for $2\le p<\infty$ have modulo of smoothness of the power type $2$, i.e. the same as a Hilbert space.
Thus, many results that are known for the Hilbert space $L_2$ and proved using some special structure of a Hilbert space can be generalized to Banach spaces $L_p$, $2\le p<\infty$ as they only rely on the geometry of the unit sphere of the space expressed in the form $\rho(u) \le \gamma u^2$.
Also, we note that in the case $L_p$-space the implementation of a dual greedy algorithm is not substantially more difficult than in $L_2$ since the norming functionals have the following simple explicit form
\[
	F_f(g) = \int\limits_{\Omega} \frac{\sign(f) |f|^{p-1} \, g}{\|f\|_p^{p-1}} \,\mathrm{d}\mu.
\]
Thus at the greedy selection step of the algorithm one has to find an approximate solution to the optimization problem
\[
	\text{maximize}\ \ 
	\int\limits_{\Omega} \sign(f_m) |f_m|^{p-1} \, g \,\mathrm{d}\mu
	\ \ \text{s.t.}\ \ 
	g \in \D,
\]
which is not much more difficult in the case $p \neq 2$ than in the case $p = 2$.

Finally, we comment on the issue of numerical stability.
Since the selection step of a greedy algorithm is usually the most expensive (as we need to search over the whole dictionary), there is a natural desire to simplify it by relaxing the search criteria, which results in a {\it weak} greedy algorithm, where instead of finding $\sup_{g\in\D} F(g)$ we are satisfied with an element $\ff\in\D$ such that 
\[
	F(\ff) \ge t_m \sup_{g\in\D} F(g).
\]
Theorem~\ref{thm:wbga_convergence_rate} demonstrates a major (and surprising) advantage of the Weak Biorthogonal Greedy Algorithms~--- the weak version of an algorithm from the $\mathcal{WBGA}$ with the weakness sequence $\tau = \{t\}$, $t\in (0,1]$ has the same convergence properties as the strong version of the algorithm with $t=1$.
Furthermore, Theorem~\ref{thm:awbga_convergence} guarantees in this case the stable convergence under very mild conditions on the relative errors~--- they must go to zero with $m \to \infty$.

We have carried over a thorough study of three fundamental properties of the Weak Biorthogonal Greedy Algorithms~--- convergence, rate of convergence, and computational stability.
The above discussion suggests that algorithms from the $\mathcal{WBGA}$ are easily implementable in many Banach spaces of interest (e.g. $L_p$) and possess the necessary theoretical properties that are essential for realization in real-life applications.

\section*{Acknowledgements}
The work was supported by the Russian Federation Government Grant No. 14.W03.31.0031.

The paper contains results obtained in frames of the program ``Center for the storage and analysis of big data'', supported by the Ministry of Science and High Education of Russian Federation (contract 11.12.2018N{\textsuperscript{\underline{o}}}13/1251/2018 between the Lomonosov Moscow State University and the Fond of support of the National technological initiative projects).

We are grateful to the reviewer for useful comments which resulted in an improved structure of the paper.

\bibliographystyle{elsarticle-harv}
\bibliography{references}

\end{document}